\theoremstyle{plain}
\newtheorem{lem}{Lemma}[section]
\newtheorem{theo}[lem]{Theorem}
\newtheorem{prop}[lem]{Proposition}
\newtheorem{corollary}[lem]{Corollary}
\font\k=cmr7
\font\rm=cmr12
  \newcommand {\reg}{\mbox{\k reg}}
  \newcommand{\s}{\operatorname{s}}
  \newcommand {\C}{{\mathbb C}}
  \newcommand {\bH}{{\mathbb H}}
  \newcommand {\N}{{\mathbb N}}
  \newcommand {\R}{{\mathbb R}}
  \newcommand {\af}{{\mathfrak a}}
  \newcommand {\gf}{{\mathfrak g}}
  \newcommand {\mf}{{\mathfrak m}}
  \newcommand {\kf}{{\mathfrak k}}
  \newcommand {\pf}{{\mathfrak p}}
  \newcommand {\nf}{{\mathfrak n}}
  \newcommand {\Co}{{\mathcal C}}
\newcommand{\cE}{\mathcal{E}}
\newcommand{\CC}{\operatorname{C}}
\newcommand  {\cZ}{{\mathcal Z}}
\newcommand {\bs}{\backslash}
 \newcommand {\bx}{{\bf x}}
 \newcommand {\by}{{\bf y}}
\renewcommand{\Re}{\operatorname{Re}}
\newcommand{\Tr}{\operatorname{Tr}}
\newcommand{\End}{\operatorname{End}}
\newcommand{\tr}{\operatorname{tr}}
\newcommand{\Id}{\operatorname{Id}}
\newcommand{\Hom}{\operatorname{Hom}}
\newcommand{\vol}{\operatorname{vol}}
\newcommand{\SL}{\operatorname{SL}}
\newcommand{\GL}{\operatorname{GL}}
\newcommand{\SO}{\operatorname{SO}}
\newcommand{\Spin}{\operatorname{Spin}}
\newcommand{\Rep}{\operatorname{Rep}}
\newcommand{\Ad}{\operatorname{Ad}}
\newcommand{\supp}{\operatorname{supp}}
\newcommand{\arcosh}{\operatorname{arcosh}}
\renewcommand{\det}{\operatorname{det}}
\newcommand{\pro}{\operatorname{pr}}
\newcommand{\ve}{\varepsilon}
\begin{document}

\title[]
{On the analytic torsion of hyperbolic manifolds of finite 
volume}
\date{\today}

\author{Werner M\"uller}
\address{Universit\"at Bonn\\
Mathematisches Institut\\
Endenicher Allee 60\\
D -- 53115 Bonn, Germany}
\email{mueller@math.uni-bonn.de}

\keywords{analytic torsion, hyperbolic manifolds}
\subjclass{Primary: 58J52, Secondary: 11M36}

\begin{abstract}
In this paper we study  the analytic torsion for a complete
oriented hyperbolic manifold of finite volume. This requires the definition
of a regularized trace of heat operators. We use the Selberg trace formula
to study the asymptotic behavior of the regularized trace for small time.
The main result of the paper is a new approach to deal with the weighted 
orbital integrals on the geometric side of the trace formula.
\end{abstract}

\maketitle

\setcounter{tocdepth}{1}
\section{Introduction}
\setcounter{equation}{0}

Let $G=\SO_0(d,1)$ and $K=\SO(d)$. Then $K$ is a maximal compact subgroup of
$G$. Let $\widetilde X:=G/K$. Equipped with a suitably normalized  invariant 
metric, $\widetilde X$ is isometric to the hyperbolic space $\bH^d$ of 
dimension $d$. Let $\Gamma\subset G$ be a lattice, i.e. a discrete
subgroup with $\vol(\Gamma\bs G)<\infty$.  Assume that $\Gamma$ is torsion free.
Then $X:=\Gamma\bs \widetilde X$ is an oriented hyperbolic $d$-manifold
of finite volume. Let $\tau$ be an irreducible finite dimensional complex
representation of $G$. Let $E_\tau$ be the flat vector bundle associated to the
restriction of $\tau$ to $\Gamma$. By \cite{MM}, $E_\tau$ can be equipped with
a canonical Hermitian fibre metric, called admissible, which is unique up to
scaling. Let $\Delta_p(\tau)$ be the Laplace operator acting in the space of
 $E_\tau$-valued $p$-forms with respect to the metrics on $X$ and in $E_\tau$. 
In \cite{MP2}
we introduced the analytic torsion $T_X(\tau)$. If $X$ is compact, $T_X(\tau)$
is defined in the usual way \cite{RS} by
\begin{equation}\label{anator1}
\log T_X(\tau)=\frac{1}{2}\sum_{p=1}^d(-1)^p p \frac{d}{ds}\left(
\frac{1}{\Gamma(s)}\int_0^\infty\left(\Tr(e^{-t\Delta_p(\tau)})-b_p(\tau)\right)
t^{s-1}\;dt\right)\bigg|_{s=0},
\end{equation}
where $b_p(\tau)=\dim\ker(\Delta_p(\tau))$ and the right hand side is defined
near $s=0$ by analytic continuation.
In the non-compact case the Laplace operator $\Delta_p(\tau)$ has a nonempty 
continuous spectrum and hence, $e^{-t\Delta_p(\tau)}$ is not a trace class operator.
In \cite{MP2} we introduced the regularized trace 
$\Tr_{\reg}\left(e^{-t\Delta_p(\tau)}\right)$ of the heat operator which we used to
define $T_X(\tau)$ by the analogous formula \eqref{anator1} with the usual 
trace replaced by the regularized trace. In order to show that the Mellin 
transform of the regularized trace is defined in some half plane and admits
a meromorphic extension to the whole complex plane one needs to know the
behavior of $\Tr_{\reg}\left(e^{-t\Delta_p(\tau)}\right)$ as $t\to 0$ and 
$t\to\infty$. To establish an asymptotic expansion of 
$\Tr_{\reg}\left(e^{-t\Delta_p(\tau)}\right)$ as $t\to 0$ we used the Selberg
trace formula. The difficult part is to deal with the weighted orbital
integrals occurring on the geometric side of the trace formula.
In fact, in \cite{MP2} we use the invariant trace 
formula of \cite{Ho1}. 
In this way the parabolic distributions become 
invariant distributions. To deal with these distributions we applied the 
Fourier 
inversion formula established by W. Hoffmann \cite{Ho2}. This 
is a very 
heavy and quite complicated machinery. Moreover at present, except for 
$\SL(3,\R)$, it is not available in  higher rank. 
The main purpose of this paper is to develop a more simplified method to deal
with the parabolic contribution to the trace formula, which also has a chance 
to be extended to the higher rank case.

Next we explain some details of our method. For simplicity we assume that $d$ 
is odd.  We also assume that $\Gamma$ satisfies \eqref{neat}.
Let $\nu\colon K\to\GL(V)$ be an irreducible unitary representation of $K$. Let
$\widetilde E_\nu\to X$ be the associated homogeneous vector bundle over 
$\widetilde X$ and let $E_\nu=\Gamma\bs\widetilde E_\nu$ be the corresponding 
locally homogeneous vector bundle over $X$. Let $\nabla^\nu$ be the invariant
connection in $E_\nu$ and let $\Delta_\nu=(\nabla^\nu)^*\nabla^\nu$ be the 
associated Bochner-Laplace operator. Let $A_\nu$ denote the differential
operator which is induced in $C^\infty(X,E_\nu)$ by the action of $-\Omega$, where
$\Omega\in\cZ(\gf_\C)$ is the Casimir element. Then $\Delta_\nu=A_\nu+
\nu(\Omega_K)$, where $\Omega_K\in\cZ(\kf_\C)$ is the Casimir element of $K$.
Note that $\nu(\Omega_K)$ is a scalar. Hence $A_\nu$ is essentially self-adjoint
and bounded from below. Therefore the heat semigroup $e^{-tA_\nu}$ is well
defined.  The study of the regularized trace of 
$e^{-t\Delta_p(\tau)}$ can be reduced to the study of the regularized 
trace of the heat operators $e^{-tA_\nu}$. Let $H^\nu(t,x,y)$ be the kernel of 
$e^{-tA_\nu}$. For $Y>1$ sufficiently large let $X(Y)$
be the compact manifold with boundary obtained from $X$ by truncating $X$ at
level $Y$. It was shown in \cite{MP2} that there exists $\alpha(t)\in \R$
such that $\int_{X(Y)}\tr H^\nu(t,x,x)\,dx -\alpha(t)\log Y$ has a limit as
$Y\to\infty$. Then we put
\[
\Tr\left(e^{-tA_\nu}\right):=\lim_{Y\to\infty}\left(\int_{X(Y)}
\tr H^\nu(t,x,x)\;dx - \alpha(t)\log Y\right).
\]
Our main result is the following theorem.
\begin{theo}\label{theo-main}
For every $\nu\in\widehat K$ there is an asymptotic expansion 
\begin{equation}
\Tr_{\reg}\left(e^{-tA_\nu}\right)\sim\sum_{j=0}^\infty a_j(\nu)t^{-d/2+j/2}\log t
+\sum_{k=0}^\infty b_k(\nu)t^{-d/2+j/2}
\end{equation}
as $t\to 0$. Moreover $a_n(\nu)=0$.
\end{theo}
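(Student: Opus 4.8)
The plan is to evaluate $\Tr_{\reg}\left(e^{-tA_\nu}\right)$ by means of the Selberg trace formula and to treat the three types of contributions on the geometric side separately. Writing the kernel on $X$ as a sum over the lattice, $H^\nu(t,x,y)=\sum_{\gamma\in\Gamma}\widetilde H^\nu(t,x,\gamma y)$ with $\widetilde H^\nu$ the lift of the kernel of $e^{-tA_\nu}$ to $\widetilde X$, I would group the $\gamma$-sum according to $\Gamma$-conjugacy classes. After the truncation at level $Y$ and the subtraction of $\alpha(t)\log Y$ this yields a decomposition $\Tr_{\reg}\left(e^{-tA_\nu}\right)=I_\nu(t)+H_\nu(t)+P_\nu(t)$ into identity, hyperbolic, and parabolic contributions. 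The hyperbolic part is controlled at once: a closed geodesic of length $\ell$ contributes a factor bounded by $e^{-\ell^2/4t}$, so that $H_\nu(t)=O(e^{-c/t})$ and is negligible for the small-time expansion.

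The identity contribution is explicit. By homogeneity of $\widetilde X=\bH^d$ one has $I_\nu(t)=\vol(X)\cdot\tr\widetilde H^\nu(t,o,o)$ for any base point $o$, and the latter is computed by the spherical Plancherel theorem on $G=\SO_0(d,1)$: it equals a constant multiple of $\int_{\R}e^{-t(\lambda^2+c_\nu)}\beta_\nu(\lambda)\,d\lambda$, where $\beta_\nu$ is the Plancherel density and $c_\nu$ the scalar built from $\nu(\Omega_K)$ and $\rho$. Since $d$ is odd, $\beta_\nu$ is a polynomial in $\lambda$, the Gaussian integral terminates, and $I_\nu(t)=\sum_{k\ge 0}c_k(\nu)\,t^{-d/2+k}$ is a finite expansion with no logarithmic terms. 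These terms feed only the (even-indexed) power part of the expansion.

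The parabolic contribution $P_\nu(t)$ is the heart of the matter, and here the new method enters. I would localise to a neighbourhood of each of the $\kappa$ cusps, where, in horocyclic coordinates $(x,y)$, the parabolic elements act as translations $x\mapsto x+\lambda$ by a full lattice $\Lambda$ in $\nf\cong\R^{d-1}$. Because $N$ is abelian, the associated weighted orbital integral can be evaluated directly by Poisson summation over $\Lambda$ on the cross-sectional flat torus $\Lambda\bs N$, rather than through the invariant trace formula and Hoffmann's Fourier inversion. This replaces the unipotent distribution by an explicit theta-type sum whose dependence on $t$ is transparent. The truncation weight grows like $\log Y$ and cancels $\alpha(t)\log Y$; the surviving finite part is an integral against a weight proportional to $\log y$, and after the diffusive substitution $y=\sqrt t\,u$ the relation $\log y=\tfrac12\log t+\log u$ splits off a factor $\tfrac12\log t$ multiplying a scale-invariant integral, which is exactly the mechanism producing the $\log t$ terms. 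Establishing this evaluation rigorously---controlling the lattice sum and the $y$-integral uniformly in $t$ and justifying termwise differentiation of the expansion under the Mellin transform---is the main obstacle.

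Finally, for the vanishing $a_n(\nu)=0$ at the critical order $t^{-d/2+n/2}=t^0$, that is $n=d$, the rescaling above shows that the coefficient of $t^0\log t$ is proportional to the coefficient of $t^0$ in the unweighted cusp integral $\int_0^\infty g_\nu(t,y)\,\tfrac{dy}{y}$, and I would prove that this coefficient is zero. The cross-section $\Lambda\bs N$ is a flat $(d-1)$-torus, so its intrinsic heat expansion reduces to the single term of order $t^{-(d-1)/2}$; the remaining $y$-dependence enters only through the hyperbolic distance $r(\lambda,y)$ and the even polynomial density $\beta_\nu$. Since $d$ is odd, the integer power $t^0$ can arise only from an odd part of this even structure, and the parity forces the corresponding coefficient to vanish---the exact analogue of the absence of a constant term in the heat expansion of an odd-dimensional closed manifold. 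Hence $a_n(\nu)=0$, which is precisely what is needed for the Mellin transform of $\Tr_{\reg}\left(e^{-tA_\nu}\right)$ to be holomorphic at $s=0$ and thus for $\log T_X(\tau)$ to be well defined.
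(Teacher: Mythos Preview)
Your global architecture---split into identity, hyperbolic, and parabolic pieces, with the identity handled by the Plancherel polynomial and the hyperbolic piece by the Gaussian decay $e^{-\ell^2/4t}$---matches the paper exactly, including the observation that $I_\nu(t)$ contributes only integer powers of $t$ and no $\log t$.

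Where you diverge is in the parabolic term, and here there is a real gap. The paper does \emph{not} run Poisson summation over the cusp lattice. It invokes Warner/Osborne--Warner to write the parabolic contribution as $C_1(\Gamma)T(h^\nu_t)+C_2(\Gamma)T'(h^\nu_t)$, where $T'(\alpha)=\int_N\alpha(n)\log\|n\|\,dn$ is a weighted orbital integral over $N$. The logarithmic weight sits on the \emph{horospherical} variable $x\in\nf\cong\R^{d-1}$, not on the height $y$; your rescaling ``$y=\sqrt t\,u$, $\log y=\tfrac12\log t+\log u$'' is applied to the wrong coordinate. The correct rescaling is $x\mapsto x/\sqrt\lambda$ in the integral $\int_{\R^{d-1}}e^{-\lambda\|x\|^2}g(x)\log\|x\|\,dx$, which indeed produces the $\log\lambda$ (hence $\log t$) terms---but getting there from $T'(h^\nu_t)=\int_{\R^{d-1}}h^\nu_t(n(x))\log\|x\|\,dx$ is the whole point of the paper. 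One inserts the pointwise heat-kernel expansion $h^\nu_t(n(x))\sim t^{-d/2}e^{-r^2(x)/4t}\sum_i a_i^\nu(x)t^i$ (with $r(x)=\arcosh(1+\|x\|^2/2)$), and then runs a H\"ormander-style stationary phase argument using the operator $L=-\|f'\|^{-2}\langle f',\nabla\rangle$ to replace the phase $r^2(x)$ by its Hessian $\|x\|^2$. This reduction to Gaussian model integrals with a $\log\|x\|$ factor is the paper's main technical contribution, and it is absent from your sketch. Your Poisson summation over $\Lambda\subset N$ would, after discarding the exponentially small nonzero modes, simply return the zero mode $\int_N h^\nu_t(n)\,dn$ and its weighted variant---i.e.\ you are back at $T$ and $T'$ without having advanced.

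For the vanishing $a_d(\nu)=0$ (the coefficient of $t^0\log t$), the paper's argument is concrete rather than a general parity principle: after the reduction above, the only possible contribution comes from the degree-one Taylor coefficients of $a_0^\nu(n(x))$ at $x=0$, each multiplied by $\int_{\R^{d-1}}e^{-\|x\|^2}x_i\,dx=0$. Your heat-expansion heuristic for a flat $(d-1)$-torus points in the right direction but does not by itself locate the coefficient or show it vanishes; the stationary-phase bookkeeping is what makes the parity visible.
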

To prove this theorem, we use the Selberg trace formula as in \cite{MP2}. 
The connection with the Selberg trace formula is as follows. Let $\widetilde
A^\nu$ be the lift of $A_\nu$  to the universal
covering $\widetilde X$ of $X$. The heat operator $e^{-t\widetilde A_\nu}$ is a 
convolution operator with a smooth kernel $H^\nu_t\colon G\to \End(V)$. Let
$h^\nu_t\in C^\infty(G)$ be defined by
\[
h^\nu_t:=\tr H^\nu_t(g),\quad g\in G.
\]
In fact, $h^\nu_t$ belongs to Harish-Chandra's Schwartz space $\Co^1(G)$.
In \cite{MP2} it was proved that $\Tr_{\reg}\left(e^{-tA_\nu}\right)$ equals
the spectral side of the Selberg trace formula with respect to the test 
function $h^\nu_t$, where the spectral side means the sum of all terms
 corresponding to the discrete and continuous spectrum in the trace formula. 
Applying the trace formula we are led to the following equality
\[
\Tr_{\reg}\left(e^{-tA_\nu}\right)=I(h^\nu_t)+H(h^\nu_t)+T(h^\nu_t)+T^\prime(h^\nu_t),
\]
where $I$, $H$, $T$ and $T^\prime$ are certain distributions which are
associated to the identity, the hyperbolic conjugacy classes and the 
parabolic conjugacy classes of $\Gamma$, respectively (see \cite{Wa1}).
To study the asymptotic behavior of $I(h_t^{\tau,p})$ as $t\to 0$, one can use
the Plancherel theorem. Since $G$ has $\R$-rank one, one can use the Fourier
inversion formula for regular semi-simple orbital integrals to study 
$H(h_t^{\tau,p})$. It follows that $H(h_t^{\tau,p})$ is exponentially decreasing 
as $t\to 0$. The distribution $T$ is invariant and can be expressed in 
terms of characters. This leads to an asymptotic expansion of $T(h^\nu_t)$ as
$t\to 0$. What remains is to deal with the distribution $T^\prime$, which is not
invariant. Using standard estimates of heat kernels, it can be reduced to the
study of integrals of the form
\begin{equation}\label{integral}
\int_{\R^{d-1}}e^{-f(x)/t}g(x)\log\| x\|\;dx,
\end{equation} 
where $g\in C^\infty_c(\R^{d-1})$ and $f\in C^\infty(\R^{d-1})$ has an isolated
critical point at $x=0$ of index zero. Then we use the method of the stationary 
phase approximation to determine the asymptotic behavior of this integral as 
$t\to 0$. 

The paper is organized as follows. In section \ref{sec-prel} we fix notations
and collect some basic facts. In section \ref{sec-regtrace} we define the
regularized trace of the heat operators $e^{-tA_\nu}$ and relate it to
the spectral side of the Selberg trace formula where the test function 
is obtained from the kernel of the
heat operator on the universal covering. In section \ref{sec-trform}
we apply the Selberg trace formula to express the regularized trace through
the geometric side of the trace formula. Then we determine the asymptotic
behavior of all terms except the weighted orbital integral. Section
\ref{sec-heatker} is a preparatory section where we establish estimates of 
heat kernels and describe their asymptotic expansion for small time. This is 
used in section
\ref{sec-weightorb} to study the asymptotic behavior of the weighted
orbital integrals. Applying the results of the previous section, the problem is
reduced to the study of integrals of the form \eqref{integral}. To deal
with these integrals we apply the method of the stationary phase
approximation. This leads finally to the proof of Theorem \ref{theo-main}. In
the last section \ref{sec-analytor} we discuss the analytic torsion.

\bigskip
{\bf Acknowledgment.}
I would like to thank Andras Vasy for some very useful hints.

\section{Preliminaries}\label{sec-prel}
\setcounter{equation}{0}

\subsection{}
Let $d=2n+1$, $n\in\N$. Let either $G=\SO_0(d,1)$, $K=\SO(d)$ or
$G=\Spin(d,1)$, $K=\Spin(d)$. Then $K$ 
is a maximal compact subgroup of $G$. Put $\widetilde X=G/K$. Let
\[
G=NAK
\]
be the standard Iwasawa decomposition of $G$ and let $M$ be the 
centralizer of $A$ in $K$. Then $M=\SO(d-1)$ or $M=\Spin(d-1)$. 
Let $\gf$, $\nf$, $\af$, $\kf$, $\mf$ denote the Lie algebras
of $G$, $N$, $A$, $K$ and $M$, respectively. Define the 
standard Cartan involution $\theta:\gf\rightarrow \gf$ by
\begin{align*}
\theta(Y)=-Y^{t},\quad Y\in\gf. 
\end{align*}
The lift of $\theta$ to $G$ will be denoted by the same letter $\theta$. Let 
\begin{equation}\label{cartan}
\gf=\kf\oplus\pf
\end{equation}
be the Cartan decomposition of $\gf$ with respect to $\theta$. Let $x_0=eK\in
\tilde X$. Then we have a canonical isomorphism
\begin{equation}\label{tspace}
T_{x_0}\tilde X\cong \pf.
\end{equation}
Define the symmetric bi-linear form $\langle\cdot,\cdot\rangle$ on $\gf$ by
\begin{equation}\label{killnorm}
\langle Y_1,Y_2\rangle:=\frac{1}{2(d-1)}B(Y_1,Y_2),\quad Y_1,Y_2\in\gf.
\end{equation}
By \eqref{tspace} the restriction of $\langle\cdot,\cdot\rangle$ to $\pf$ 
defines an inner product on $T_{x_0}\tilde X$ and therefore an invariant 
metric on $\tilde X$. This metric has constant curvature $-1$. Then $\tilde X$,
equipped with this metric, is isometric to the hyperbolic space
${\mathbb{H}}^{d}$. 

Fix a Cartan subalgebra $\mathfrak{b}$ of $\mathfrak{m}$. 
Then
\begin{align*}
\mathfrak{h}:=\mathfrak{a}\oplus\mathfrak{b}
\end{align*}
is a Cartan subalgebra of $\mathfrak{g}$. We can identify
$\mathfrak{g}_\C\cong\mathfrak{so}(d+1,\C)$. Let $e_1\in\af^*$ be the
positive restricted root defining $\mathfrak{n}$.
Then we fix $e_2,\dots,e_{n+1}\in
i\mathfrak{b}^*$ such that 
the positive roots $\Delta^+(\mathfrak{g}_\C,\mathfrak{h}_\C)$ are chosen as in
\cite[page 684-685]{Kn2}
for the root system $D_{n+1}$. We let
$\Delta^+(\mathfrak{g}_\C,\mathfrak{a}_\C)$ be
the set of roots of $\Delta^+(\mathfrak{g}_\C,\mathfrak{h}_\C)$ which do not
vanish on $\af_\C$. The positive roots
$\Delta^+(\mathfrak{m}_\C,\mathfrak{b}_\C)$
are chosen such that they are restrictions of elements from
$\Delta^+(\mathfrak{g}_\C,\mathfrak{h}_\C)$.
For $j=1,\dots,n+1$ let
\begin{equation}\label{rho}
\rho_{j}:=n+1-j.
\end{equation}
Then the half-sums of positive roots $\rho_G$ and $\rho_M$, respectively, are
given by
\begin{align}\label{Definition von rho(G)}
\rho_{G}:=\frac{1}{2}\sum_{\alpha\in\Delta^{+}(\mathfrak{g}_{\mathbb{C}},
\mathfrak{h}_\mathbb{C})}\alpha=\sum_{j=1}^{n+1}\rho_{j}e_{j};\quad
\rho_{M}:=\frac{1}{2}\sum_{\alpha\in\Delta^{+}(\mathfrak{m}_{\mathbb{C}},
\mathfrak{b}_{\mathbb{C}})}\alpha=\sum_{j=2}^{n+1}\rho_{j}e_{j}.
\end{align}
Let $\Omega$, $\Omega_K$ and $\Omega_M$ be the Casimir elements of $G$, $K$
and $M$, respectively, with respect to the normalized Killing form
\eqref{killnorm}. 

\subsection{}
Let ${{\mathbb{Z}}\left[\frac{1}{2}\right]}^{j}$ be the set of all 
$(k_{1},\dots,k_{j})\in\mathbb{Q}^{j}$ such that either all $k_{i}$ are 
integers or all $k_{i}$ are half integers. 
Let $\Rep(G)$ denote the set of finite dimensional irreducible
representations 
$\tau$ of $G$. These  
are parametrized by their highest weights
\begin{equation}\label{Darstellungen von G}
\Lambda(\tau)=k_{1}(\tau)e_{1}+\dots+k_{n+1}(\tau)e_{n+1};\:\:
k_{1}(\tau)\geq 
k_{2}(\tau)\geq\dots\geq k_{n}(\tau)\geq \left|k_{n+1}(\tau)\right|,
\end{equation}
where $(k_{1}(\tau),\dots, k_{n+1}(\tau))$ belongs to
${{\mathbb{Z}}\left[\frac{1}{2}\right]}^{n+1}$ if 
$G=\Spin(d,1)$ and to ${{\mathbb{Z}}}^{n+1}$ if $G=\SO^0(d,1)$. 
Moreover, the finite dimensional irreducible representations $\nu\in\hat{K}$ of
$K$ are
parametrized by their highest weights
\begin{equation}\label{Darstellungen von K}
\Lambda(\nu)=k_{2}(\nu)e_{2}+\dots+k_{n+1}(\nu)e_{n+1};\:\:
k_{2}(\nu)\geq 
k_{3}(\nu)\geq\dots\geq k_{n}(\nu)\geq k_{n+1}(\nu)\geq 0,
\end{equation} 
where $(k_{2}(\nu),\dots, k_{n+1}(\nu))$ belongs to
${{\mathbb{Z}}\left[\frac{1}{2}\right]}^{n}$ if 
$G=\Spin(d,1)$ and to ${{\mathbb{Z}}}^{n}$ if $G=\SO^0(d,1)$.
Finally, the  finite dimensional irreducible representations 
$\sigma\in\hat{M}$ of $M$ 
are parametrized by their highest weights
\begin{equation}\label{Darstellungen von M}
\Lambda(\sigma)=k_{2}(\sigma)e_{2}+\dots+k_{n+1}(\sigma)e_{n+1};\:\:
k_{2}(\sigma)\geq 
k_{3}(\sigma)\geq\dots\geq k_{n}(\sigma)\geq \left|k_{n+1}(\sigma)\right|,
\end{equation}
where $(k_{2}(\sigma),\dots, k_{n+1}(\sigma))$ belongs to
${{\mathbb{Z}}\left[\frac{1}{2}\right]}^{n}$, if 
$G=\Spin(d,1)$, and to ${{\mathbb{Z}}}^{n}$, if $G=\SO^0(d,1)$.
For $\nu\in\hat{K}$ and  $\sigma\in \hat{M}$ we denote by $[\nu:\sigma]$ the
multiplicity of $\sigma$ in the restriction of $\nu$ to $M$.

Let $M'$ be the normalizer of $A$ in $K$ and let $W(A)=M'/M$ be the 
restricted Weyl-group. It has order two and it acts on the finite-dimensional 
representations of $M$ as follows. Let $w_0\in W(A)$ be the non-trivial 
element and let $m_0\in M^\prime$ be a representative of $w_0$. Given 
$\sigma\in\hat M$, the representation $w_0\sigma\in \hat M$ is defined by
\[
w_0\sigma(m)=\sigma(m_0mm_0^{-1}),\quad m\in M.
\] 
Let
$\Lambda(\sigma)=k_{2}(\sigma)e_{2}+\dots+k_{n+1}(\sigma)e_{n+1}$ be the 
highest weight
of $\sigma$ as in \eqref{Darstellungen von M}. Then the highest weight 
$\Lambda(w_0\sigma)$ of $w_0\sigma$ is given by
\begin{equation}\label{wsigma}
\Lambda(w_0\sigma)=k_{2}(\sigma)e_{2}+\dots+k_{n}(\sigma)e_{n}
-k_{n+1}(\sigma)e_{n+1}.
\end{equation}

\subsection{}

Let $P:=NAM$. This is the standard parabolic subgroup of $G$. 
We equip $\af$ with the norm induced from the restriction of the
normalized Killing form on $\gf$. Let $H_1\in\af$ be
the unique vector which is of norm one and such that the positive restricted
root,
implicit in the choice of $N$, is positive on $H_1$. Let
$\exp:\af\to
A$ be
the exponential map.
Every $a\in A$ can be written 
as $a=\exp{\log{a}}$, where $\log{a}\in\mathfrak{a}$ is unique.
For $t\in\mathbb{R}$, we let 
$a(t):=\exp{(tH_{1})}$. If $g\in G$, we define $n(g)\in N$, $
H(g)\in \R$ and $\kappa(g)\in K$ by 
\begin{align*}
g=n(g)a(H(g))\kappa(g). 
\end{align*}
A given $g\in G$ can always be written in the form
\begin{equation}\label{radcomp}
g=k_1a(t(g))k_2,
\end{equation}
where $k_1,k_2\in K$ and $t(g)\ge 0$. We note that $t(g)$ is unique and we
call it the radial component of $g$. For $x,y\in \tilde X$ let $r(x,y)$
denote the geodesic distance of $x$ and $y$. Then we have
\begin{equation}\label{dist}
r(g(x_0),x_0)=t(g),\quad g\in G.
\end{equation}

Now let $P'$ be any proper parabolic subgroup of $G$. Then 
there exists a $k_{P'}\in K$ such that
$P'=N_{P'}A_{P'}M_{P'}$ with $N_{P'}=k_{P'}Nk_{P'}^{-1}$,
$A_{P'}=k_{P'}Ak_{P'}^{-1}$, $M_{P'}=k_{P'}Mk_{P'}^{-1}$. We choose a set of
$k_{P'}$'s, which will be fixed from now on. Let
$k_{P}=1$.
We let $a_{P'}(t):=k_{P'}a(t)k_{P'}^{-1}$. If $g\in G$, we define
$n_{P'}(g)\in N_{P'}$, $H_{P'}(g)\in
\mathbb{R}$ and $\kappa_{P'}(g)\in K$ by 
\begin{align}\label{eqg}
g=n_{P'}(g)a_{P'}(H_{P'}(g))\kappa_{P'}(g) 
\end{align}
and we define an identification $\iota_{P'}$ of
$\left(0,\infty\right)$ with
$A_{P'}$ by $\iota_{P'}(t):=a_{P'}(\log(t))$. 
For $Y>0$, let $A^{0}_{P'}\left[Y\right]:=\iota_{P'}(Y,\infty)$ and
$A_{P'}\left[Y\right]:=\iota_{P'}[Y,\infty)$. For $g\in G$ as in \eqref{eqg} we
let $y_{P'}(g):=e^{H_{P'}(g)}$.

\subsection{}
Let $\Gamma$ be a discrete subgroup of $G$ such that
$\vol(\Gamma\backslash G)<\infty$.  Let 
$X:=\Gamma\backslash\widetilde{X}$. Let $\pro_X:G\to X$ be the projection. 
A parabolic subgroup $P'$ of $G$ is called a $\Gamma$-cuspidal parabolic
subgroup if $\Gamma\cap N_{P'}$ is a lattice in $N_{P'}$. We assume that that 
$\Gamma$ satisfies the following condition: For every $\Gamma$-cuspidal proper
parabolic subgroup $P=N_PA_PM_P$ of $G$ we have
\begin{equation}\label{neat}
\Gamma\cap P=\Gamma\cap N_P.
\end{equation}
We note that this condition is satisfied, if $\Gamma$ is ``neat'', which means
that the group generated by the eigenvalues of any $\gamma\in\Gamma$ contains 
no roots of unity $\neq 1$. 

Let $\mathfrak{P}_\Gamma=\{P_1,\dots,P_{\kappa(\Gamma)}\}$ be a set of 
representatives of $\Gamma$-conjugacy classes of $\Gamma$-cuspidal parabolic 
subgroups of $G$. 
The number
\begin{equation}\label{cusps}
\kappa(X):=\kappa(\Gamma)=\#\mathfrak{P}_{\Gamma}
\end{equation}
is finite and equals the number of cusps of $X$. More precisely, for each
$P_i\in\mathfrak{P}_\Gamma$ there exists a $Y_{P_i}>0$ and there exists 
a compact connected subset $C=C(Y_{P_1},\dots,Y_{P_{\kappa(\Gamma)}})$
of $G$  such that in the sense of a disjoint union one has
\begin{align}\label{FBI}
G=\Gamma\cdot
C\sqcup\bigsqcup_{i=1}^{
\kappa(X)}\Gamma\cdot
N_{P_i}A^{0}_{P_i}\left[Y_{P_i}\right]K
\end{align}
and such that 
\begin{align}\label{FBII}
\gamma\cdot N_{P_i}A^0_{P_i}\left[Y_{P_i}\right]K\cap
N_{P_i}A_{P_i}^{0}\left[Y_{P_i}\right]K\neq
\emptyset\Leftrightarrow\gamma\in \Gamma\cap P_i. 
\end{align}
We define the height-function $y_{\Gamma,P_i}$ on $X$ by 
\begin{equation}\label{heightf}
y_{\Gamma,P_i}(x):=\sup\{y_{P_i}(g)\colon g\in G,\; \pro_X(g)=x\}. 
\end{equation}
By \eqref{FBI} and \eqref{FBII} the supremum is finite.
For $Y\in \R^+$ let
\begin{equation}\label{truncmfd}
X(Y):=\{x\in X\colon
y_{\Gamma, P_i}(x)\leq Y,\:i=1,\dots,\kappa(X)\}.
\end{equation}

\subsection{}\label{secPL}
Recall that $d=2n+1$. For $\sigma\in\hat{M}$ and $\lambda\in\R$ let
$\mu_{\sigma}(\lambda)$ 
be the Plancherel measure associated to the principal series 
representation $\pi_{\sigma,\lambda}$. Then, since 
${\rm{rk}}(G)>{\rm{rk}}(K)$, $\mu_{\sigma}(\lambda)$ is a polynomial in 
$\lambda$ of degree $2n$. Let $\left<\cdot,\cdot\right>$ be the bi-linear form
defined by 
\eqref{killnorm}. Let $\Lambda(\sigma)\in\mathfrak{b}_{\mathbb{C}}^{*}$ be 
the highest weight of $\sigma$ as in \eqref{Darstellungen von M}.
Then by theorem 13.2 in \cite{Kn1} there exists a constant $c(n)$ such that 
one has
\begin{align*}
\mu_{\sigma}(\lambda)=-c(n)\prod_{\alpha\in\Delta^{+}(\mathfrak{g}_{\mathbb{C}},
\mathfrak{h}_{\mathbb{C}})}\frac{\left<i\lambda e_{1}+\Lambda(\sigma)+\rho_{M},
\alpha\right>}{\left<\rho_{G},\alpha\right>}..
\end{align*}
The constant $c(n)$ is computed in \cite{Mi2}. By \cite{Mi2}, theorem 3.1, one 
has $c(n)>0$. 
For $z\in\mathbb{C}$  let
\begin{align}\label{plancherelmass}
P_{\sigma}(z)=-c(n)\prod_{\alpha\in\Delta^{+}(\mathfrak{g}_{\mathbb{C}},
\mathfrak{h}_{\mathbb{C}})}\frac{\left<z e_{1}+\Lambda(\sigma)+\rho_{M},
\alpha\right>}{\left<\rho_{G},\alpha\right>}.
\end{align}
One easily sees that
\begin{align}\label{P-Polynom is W-inv}
P_{\sigma}(z)=&P_{w_{0}\sigma}(z).
\end{align}

\section{The regularized trace}\label{sec-regtrace}

Regard $G$ as a principal $K$-fibre bundle over $\tilde{X}$. By the invariance
 of $\mathfrak{p}$ under $\Ad(K)$, the assignment
\begin{align*}
T_{g}^{{\rm{hor}}}:=\{\frac{d}{dt}\bigr|_{t=0}g\exp{tX}\colon
X\in\mathfrak{p}\} 
\end{align*}
defines a horizontal distribution on $G$. This connection is called the 
canonical connection. 
Let $\nu$ be a finite-dimensional unitary representation of $K$ on 
$(V_{\nu},\left<\cdot,\cdot\right>_{\nu})$. Let
\begin{align*}
\tilde{E}_{\nu}:=G\times_{\nu}V_{\nu}
\end{align*}
be the associated homogeneous vector bundle over $\tilde{X}$. Then 
$\left<\cdot,\cdot\right>_{\nu}$ induces a $G$-invariant metric 
$\tilde{B}_{\nu}$ on $\tilde{E}_{\nu}$. Let $\widetilde{\nabla}^{\nu}$ be the 
connection on $\tilde{E}_{\nu}$ induced by the canonical connection. Then 
$\widetilde{\nabla}^{\nu}$ is $G$-invariant.
Let  
\begin{align*}
E_{\nu}:=\Gamma\backslash(G\times_{\nu}V_{\nu})
\end{align*}
be the associated locally homogeneous bundle over $X$. Since 
$\tilde{B}_{\nu}$ and $\widetilde{\nabla}^{\nu}$ are $G$-invariant, they push 
down to a metric $B_{\nu}$ and a connection $\nabla^{\nu}$ on $E_{\nu}$. Let
\begin{align}\label{globsect}
C^{\infty}(G,\nu):=\{f:G\rightarrow V_{\nu}\colon f\in C^\infty,\;
f(gk)=\nu(k^{-1})f(g),\,\,\forall g\in G, \,\forall k\in K\}.
\end{align}
Let
\begin{align}\label{globsect1}
C^{\infty}(\Gamma\backslash G,\nu):=\left\{f\in C^{\infty}(G,\nu)\colon 
f(\gamma g)=f(g)\:\forall g\in G, \forall \gamma\in\Gamma\right\}.
\end{align}
Let $C^{\infty}(\widetilde X,\widetilde E_{\nu})$ (resp. $C^{\infty}(X,E_{\nu})$)
denote the space of smooth sections of $\widetilde E_{\nu}$ (resp. $E_\nu$). 
Then there are canonical isomorphisms
\begin{equation}\label{isomor1}
\widetilde \phi:C^{\infty}(\widetilde X,\widetilde E_{\nu})\cong 
C^{\infty}(G,\nu)\quad\text{and}\quad 
\phi:C^{\infty}(X,E_{\nu})\cong C^{\infty}(\Gamma\backslash G,\nu)
\end{equation}
(see \cite[p. 4]{Mia}).
There are also corresponding isometries for the spaces 
$L^{2}(\widetilde X,\widetilde E_{\nu})$ and $L^{2}(X,E_{\nu})$ of 
$L^{2}$-sections of $\widetilde E_{\nu}$ and $E_{\nu}$, respectively. For every 
$X\in\mathfrak{g}$, $g\in G$, and every $f\in C^{\infty}(X,E_{\nu})$ one has
\begin{align*}
\phi(\nabla^{\nu}_{L(g)_{*}X}f)(g)=\frac{d}{dt}\phi(f)(g\exp{tX})\big|_{t=0}.
\end{align*}
Let
\[
\Delta_\nu=(\nabla^\nu)^*\nabla^\nu
\]
be the Bochner-Laplace operator acting in $C^\infty(X,E_\nu)$.  Since $X$ is 
complete, $\Delta_\nu$ regarded
as linear operator in $L^2(X,E_\nu)$ with domain $C^\infty_c(X,E_\nu)$ is 
essentially self-adjoint \cite{Che}. By \cite[Proposition1.1]{Mia} it follows 
that on $C^{\infty}(\Gamma\bs G,\nu)$ one has
\begin{align}\label{BLO}
\Delta_{\nu}=-R_\Gamma(\Omega)+\nu(\Omega_K),
\end{align} 
where $R_\Gamma$ denotes the right regular representation of $G$ on 
$C^\infty(\Gamma\bs G,\nu)$. Let $A_\nu$ be the differential operator in
$C^\infty(X,E_\nu)$ which acts as $-R_\Gamma(\Omega)$ in 
$C^\infty(\Gamma\bs G,\nu)$. If $\nu$ is irreducible, then $\nu(\Omega_K)$ is
a scalar. In general it is an endomorphism of $E_\nu$ which commutes with 
$A_\nu$. It follows from \eqref{BLO} that $A_\nu$ is a self-adjoint operator 
which is bounded from below. Therefore, the heat operator $e^{-tA_\nu}$ is well
defined and we have
\begin{equation}\label{heat-bl}
e^{-t\Delta_\nu}=e^{-t\nu(\Omega_K)} e^{-tA_\nu}.
\end{equation}
 Let $H^\nu(t,x,y)$ be the kernel of $e^{-tA_\nu}$. Let $X(Y)\subset X$ be 
defined by \eqref{truncmfd}. For $Y\gg 0$ this is compact manifold with 
boundary. It follows from \cite[(5.6)]{MP2} that there exist smooth 
functions $a(t)$ and $b(t)$ such that
\[
\int_{X(Y)}\tr H^\nu(t,x,x)\;dx=a(t)\log Y+b(t)+o(1)
\]
as $Y\to\infty$. Put
\begin{equation}\label{regtrace}
\Tr_{\reg}\left(e^{-tA_\nu}\right):=b(t).
\end{equation}
In \cite[(5.6)]{MP2}, the regularized trace $\Tr_{\reg}(e^{-tA_\nu})$ is  
described explicitly in terms of the discrete spectrum of $A_\nu$ and the 
intertwining operators. To state the formula we need to introduce some notation.
Let $\sigma\in\widehat M$ with highest weight given by 
\eqref{Darstellungen von M}. Let $\rho_j$, $j=1,\dots,n+1$ be defined by
\eqref{rho}. Put
\begin{equation}\label{csigma}
c(\sigma):=\sigma(\Omega_M)-n^2=\sum_{j=2}^{n+1}(k_j(\sigma)+\rho_j)^2-
\sum_{j=1}^{n+1}\rho_j^2,
\end{equation}
where the second equality follows from a standard computation. 
Let $w\in W(A)$ be the nontrivial element.
For $\lambda\in\C$ let
\begin{equation}\label{intertwop2}
\mathbf{C}(\sigma,\lambda)\colon \boldsymbol{\cE}(\sigma)\to
\boldsymbol{\cE}(w\sigma)
\end{equation}
be the intertwining operator defined in \cite[(3.13)]{MP2}. Let 
\[
\widetilde{\boldsymbol{C}}(\sigma,\nu,\lambda)\colon  
(\boldsymbol{\cE}(\sigma)\otimes V_\nu)^K\to 
(\boldsymbol{\cE}(w\sigma)\otimes V_\nu)^K
\]
be the restriction of
$\boldsymbol{C}(\sigma,\lambda)\otimes \Id_{V_\nu}$ to 
$(\boldsymbol{\cE}(\sigma)\otimes V_\nu)^K$. Furthermore, let
$A_\nu^d$ denote the restriction of $A_\nu$ to the discrete subspace
$L^2_d(X,E_\nu)$ of $A_\nu$. Then by \cite[(5.6)]{MP2} we have
\begin{equation}\label{regtrace2}
\begin{split}
\Tr_{\reg}\left(e^{-tA_\nu}\right)&=\Tr\left(e^{-tA^d_\nu}\right)+
\sum_{\substack{\sigma\in\hat{M};\sigma=w_0\sigma\\
\left[\nu:\sigma\right]\neq
0}}e^{tc(\sigma)}
\frac{\Tr(\widetilde{\boldsymbol{C}}(\sigma,\nu,0))}{4}\\
&-\frac{1}{4\pi}\sum_{\substack{\sigma\in\hat{M}\\
\left[\nu:\sigma\right]\neq 0}}e^{tc(\sigma)}\int_{\R}e^{-t\lambda^2}
\Tr\left(\widetilde{\boldsymbol{C}}(\sigma,\nu,-i\lambda)
\frac{d}{dz}\widetilde{\boldsymbol{C}}(\sigma,\nu,i\lambda)\right)\,d\lambda.
\end{split}
\end{equation}
It follows from \cite[Theorem 8.4]{Wa1} that the right hand side of 
\eqref{regtrace2} equals the spectral side of the Selberg trace formula
applied to $e^{-tA_\nu}$. 

\section{The trace formula}\label{sec-trform}
\setcounter{equation}{0}

In this section we apply the Selberg trace formula to study the regularized
trace of the heat operator $e^{-tA_\nu}$. To this end we 
briefly recall the Selberg trace formula. First we introduce the
distributions on the geometric side which are associated to the different
conjugacy classes of $\Gamma$.  
Let $\alpha$ be a K-finite Schwartz function on $G$. The contribution of the
identity is 
\begin{align*}
I(\alpha):=\vol(\Gamma\backslash G)\alpha(1).
\end{align*}
By \cite[Theorem 3]{HC}, the Plancherel theorem can be applied to $\alpha$. For
groups of real rank one which do not possess a compact Cartan subgroup it is
stated in \cite[Theorem 13.2]{Kn1}. For $\sigma\in\widehat M$ and $\lambda\in\C$
let $\pi_{\sigma,\lambda}$ be the principle series representation which we 
parametrize as in \cite[Sect. 2.7]{MP2}. Let $\Theta_{\sigma,\lambda}$ be the 
character of $\pi_{\sigma,\lambda}$. Let $P_\sigma(z)$ be the polynomial defined by 
\eqref{plancherelmass}. Then one has
\begin{align}\label{Idcontr}
I(\alpha)=\vol(X)\sum_{\substack{\sigma\in\hat{M}\\\left[\nu:\sigma\right]\neq
0}}\int_{\mathbb{R}}{P_{\sigma}(i\lambda)\Theta_{\sigma,\lambda}(\alpha)}
d\lambda,
\end{align}
where the sum is finite since $\alpha$ is $K$-finite. In even dimensions an
additional contribution of the discrete series appears.

Next let $\CC(\Gamma)_{\s}$ be the set of semi-simple conjugacy classes
$\left[\gamma\right]$. The contribution of the hyperbolic conjugacy classes
is given by
\begin{align*}
H(\alpha):=\int_{\Gamma\backslash
G}\sum_{\left[\gamma\right]\in\CC(\Gamma)_{\s}-\left[1\right]}
\alpha(x^{-1}\gamma x)dx.
\end{align*}
By \cite[Lemma 8.1]{Wa1} the integral converges absolutely. 
Its Fourier
transform can be computed as
follows.
Since $\Gamma$ is assumed to be torsion free, every
nontrivial semi-simple element $\gamma$ is conjugate to an element
$m(\gamma)\exp{\ell(\gamma)H_1}$, $m(\gamma)\in M$. By \cite[Lemma
6.6]{Wal},
$l(\gamma)>0$
is unique and $m(\gamma)$ is determined up to conjugacy in $M$.
Moreover, $\ell(\gamma)$ is the length of the unique closed geodesic
associated to $\left[\gamma\right]$. It follows that $\Gamma_{\gamma}$, the
centralizer of $\gamma$ in $\Gamma$, is infinite cyclic. Let $\gamma_{0}$ denote
its generator which is semi-simple too. For
$\gamma\in\left[\Gamma\right]_{S}-\{\left[1\right]\}$ let
$a_\gamma:=\exp{\ell(\gamma)H_1}$ and let
\begin{align}\label{hyperbcontr}
L(\gamma,\sigma):=\frac{\overline{\Tr(\sigma)(m_{\gamma})}}
{\det\left(\Id-\Ad(m_\gamma a_\gamma)|_{\bar\nf}\right)}e^{-n\ell(\gamma)}.
\end{align}
Proceeding as in \cite{Wal} and using \cite[equation 4.6]{Ga}, one obtains
\begin{align}\label{Hyperb}
H(\alpha)=&\sum_{\sigma\in\hat{M}}\sum_{\left[\gamma\right]\in \CC(\Gamma)_{\s}
-\left[1\right]}\frac{l(\gamma_{0})}{2\pi}
L(\gamma,\sigma)\int_{-\infty}^{\infty}{\Theta_{\sigma,\lambda}(\alpha)e^{
-il(\gamma)\lambda}d\lambda},
\end{align}
where the sum is finite since $\alpha$ is $K$-finite.  

Next we describe the parabolic contribution. Put
\begin{equation}
T(\alpha):=\int_{K}\int_{N}{\alpha(kn k^{-1})dn}.
\end{equation}
Let $n\in N$. There exists a unique $Y\in\nf$ such that $n=\exp(Y)$. Put
$\|n\|:=\|Y\|$. Then let
\begin{equation}\label{weigh-orb}
T'(\alpha):=\int_{K}\int_{N}\alpha(kn k^{-1})\log\|n\|\;dn\;dk.
\end{equation}
By the Theorem on p. 299 in \cite{OW} there exist constants $C_1(\Gamma)$ and 
$C_2(\Gamma)$ such
that the contribution of the parabolic conjugacy classes equals
\begin{equation}\label{parabol-contr}
C_1(\Gamma)T(\alpha)+C_2(\Gamma)T^\prime(\alpha).
\end{equation}
The distributions $T$ and $T^\prime$ are tempered and $T$ is an invariant
distribution. Applying the Fourier inversion formula and
the Peter-Weyl-Theorem to
equation 10.21 in \cite{Kn1}, one obtains 
the Fourier transform of T as:
\begin{align}\label{Fouriertrafo T}
T(\alpha)=\sum_{\sigma\in\hat{M}}\dim(\sigma)\frac{1}{2\pi}
\int_{\mathbb{R}}\Theta_{\sigma,\lambda}(\alpha)d\lambda.
\end{align}
The distribution $T^\prime_P$ is not invariant. One way to deal with this
distribution is to make it invariant (see \cite[(6.15)]{MP2}) and then
apply the Fourier inversion formula of \cite{Ho2}. As explained in the
introduction, we will use a different method.

Let $\widetilde A_\nu$ be the differential operator in $C^\infty(\widetilde X,
\widetilde E_\nu)$ induced by $-\Omega$. This is the lift of $A_\nu$ to 
$\widetilde X$. Let $\widetilde \Delta_\nu=(\widetilde \nabla^\nu)^*\widetilde
\nabla^\nu$ be the Bochner-Laplace operator associated to the canonical 
connection in $\widetilde E_\nu$. Then we have
\begin{equation}\label{BLO1}
\widetilde \Delta_\nu=\widetilde A_\nu+\nu(\Omega_K).
\end{equation}
Denote by $\widetilde H^\nu(t,x,y)$ the kernel of the heat operator 
$e^{-t\widetilde A_\nu}$. 
Observe that $\widetilde H^\nu(t,x,y)\in
\Hom((\widetilde E_\nu)_y,(\widetilde E_\nu)_x)$. 
For $g\in G$ and $x\in\widetilde X$ let $L_g\colon \widetilde E_x\to \widetilde
E_{gx}$ be the isomorphism induced by the left translation. Since $\widetilde
\Delta_\nu$ commutes with the action of $G$, the kernel satisfies 
\begin{equation}\label{isomor2}
L_g^{-1}\circ \widetilde H^\nu(t,gx,gy)\circ L_g
=\widetilde H^\nu(t,x,y),\quad x,y\in\widetilde X,\,
g\in G,
\end{equation}
considered as a linear map $\widetilde E_y\to \widetilde E_x$.
Let $x_0:=eK\in\widetilde X$. We identify $\widetilde E_{x_0}$ with $V_\nu$. 
Using the isomorphism \eqref{isomor1}, $\widetilde H^\nu(t,x,y)$ corresponds 
to a kernel
\[
\widetilde H^\nu_t\colon G\times G\to \End(V_\nu),
\]
which is defined by
\begin{equation}\label{kernel6}
\widetilde H^\nu_t(g_1,g_2):=L_{g_1}^{-1}\circ \widetilde H^\nu(t,g_1x_0,g_2x_0)
\circ L_{g_2}.
\end{equation}
By \eqref{isomor2} it follows that it satisfies
\begin{equation}\label{invar}
\widetilde H^\nu_t(gg_1,gg_2)=\widetilde H^\nu_t(g_1,g_2),\quad g,g_1,g_2\in G,
\end{equation}
and
\begin{equation}\label{transf1}
\widetilde H^\nu_t(g_1k_1,g_2k_2)=\nu(k_1)^{-1}\circ \widetilde 
H^\nu_t(g_1,g_2)\circ \nu(k_2),\quad k_1,k_2\in K,\; g\in G.
\end{equation}
Using \eqref{invar}, we can identify $\widetilde H^\nu_t$ with a map 
\[
H^\nu_t\colon G\to\End(V_\nu)
\]
by
\begin{equation}\label{convol}
H^\nu_t(g):=\widetilde H^\nu_t(e,g),\quad g\in G.
\end{equation}
Then $H^\nu_t$ belongs to $(\Co^1(G)\otimes \End(V_\nu))^K$ and satisfies
\begin{equation}\label{transf2}
H^\nu_t(k_1gk_2)=\nu(k_1)\circ H^\nu_t(g)\circ\nu(k_2),\quad k_1,k_2\in K,\;
g\in G.
\end{equation}
Let $h^\nu_t$ be defined by
\begin{equation}\label{trace4}
h_t^\nu(g):=\tr H^\nu_t(g),\quad g\in G.
\end{equation}
Then belongs $h^\nu_t$ to $\Co^1(G)$ (see \cite{BM}). If we apply the Selberg
trace formula \cite[Theorem 8.4]{Wa1} to \eqref{regtrace2} and use
\eqref{parabol-contr}, we obtain the following theorem.
\begin{theo}\label{regtrace3}
For all $t>0$ we have
\[
\Tr_{reg}\left(e^{-tA_\nu}\right)=I(h^\nu_t)+H(h^\nu_t)+C_1(\Gamma) T(h^\nu_t)
+C_2(\Gamma)T^\prime(h^\nu_t).
\] 
\end{theo}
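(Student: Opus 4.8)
The plan is to apply the Selberg trace formula of \cite[Theorem 8.4]{Wa1} to the test function $h^\nu_t$ and to match the two sides against the objects already assembled in this section and in Section~\ref{sec-regtrace}. First I would confirm admissibility: by the remark following \eqref{trace4}, the scalar function $h^\nu_t=\tr H^\nu_t$ lies in Harish-Chandra's Schwartz space $\Co^1(G)$ (by \cite{BM}), so it is a legitimate test function, and by \eqref{transf2} it has the requisite $K$-transformation behaviour. Hence \cite[Theorem 8.4]{Wa1} applies and equates the spectral side with the geometric side of the trace formula for $h^\nu_t$.

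For the spectral side, I would invoke \eqref{regtrace2}, which expresses $\Tr_{\reg}(e^{-tA_\nu})$ through the discrete spectrum of $A_\nu$ together with the intertwining operators $\widetilde{\boldsymbol{C}}(\sigma,\nu,\lambda)$. As recorded just after \eqref{regtrace2} via \cite[Theorem 8.4]{Wa1}, this explicit expression is precisely the spectral side — the combined contribution of the discrete and continuous spectrum — evaluated at $h^\nu_t$. Therefore the spectral side equals $\Tr_{\reg}(e^{-tA_\nu})$.

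For the geometric side, I would use the standard decomposition of its terms according to $\Gamma$-conjugacy classes, which split into the identity, the nontrivial semisimple (hyperbolic) classes, and the parabolic classes, contributing $I(h^\nu_t)$, $H(h^\nu_t)$, and a parabolic term respectively. By the Theorem on p.~299 of \cite{OW}, recorded here as \eqref{parabol-contr}, the parabolic term equals $C_1(\Gamma)T(h^\nu_t)+C_2(\Gamma)T'(h^\nu_t)$. Equating the spectral and geometric sides then yields the asserted identity.

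The main obstacle is reconciling the vector-valued setting with the scalar trace formula. The operator $A_\nu$ acts on sections of $E_\nu$ and its lift has the $\End(V_\nu)$-valued kernel $\widetilde H^\nu_t$, whereas \cite[Theorem 8.4]{Wa1} takes the scalar input $h^\nu_t=\tr H^\nu_t$. The bridge is the transformation rule \eqref{transf2} together with the identification, established in \cite{MP2} and recorded in \eqref{regtrace2}, that passing to $\tr H^\nu_t$ converts the (regularized) operator trace of $e^{-tA_\nu}$ into the spectral side of the trace formula for $h^\nu_t$. Once this identification is in hand, the decomposition of the geometric side and the evaluation of the parabolic contribution via \cite{OW} are routine, and the theorem follows.
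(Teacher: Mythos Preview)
Your proposal is correct and follows essentially the same approach as the paper: the paper's proof is the one-sentence remark that $h^\nu_t\in\Co^1(G)$ by \cite{BM}, then apply the Selberg trace formula \cite[Theorem 8.4]{Wa1} to \eqref{regtrace2} and use \eqref{parabol-contr} for the parabolic term. Your write-up is simply a more explicit unpacking of these same ingredients.
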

This theorem can be used to determine the asymptotic behavior of 
$\Tr_{reg}\left(e^{-tA_\nu}\right)$ as $t\to 0$. For $I(h^\nu_t)$ we use
\eqref{Idcontr}. The character $\Theta_{\sigma,\lambda}(h_t^\nu)$ is computed by
\cite[Proposition 4.1]{MP2}. We have
\begin{equation}\label{character}
\Theta_{\sigma,\lambda}(h_t^\nu)=e^{t(c(\sigma)-\lambda^2)}.
\end{equation}
By \eqref{Idcontr} it follows that
\[
I(h^\nu_t)=\vol(X)
\sum_{\substack{\sigma\in\hat{M}\\\left[\nu:\sigma\right]\neq0}}e^{tc(\sigma)}
\int_{\R}e^{-t\lambda^2}P_{\sigma}(i\lambda)\;d\lambda.
\]
Now recall that $P_\sigma(z)$ is an even polynomial of degree $2n=d-1$.
Hence we obtain an expansion
\begin{equation}\label{Idcontr1}
I(h^\nu_t)=t^{-d/2}\sum_{j=0}^\infty a_jt^j.
\end{equation}
Using \eqref{character} and \eqref{Hyperb}, it follows that
\begin{equation}\label{hyper-asymp}
H(h_t^\nu)=O(e^{-c/t}),\quad 0<t\le 1.
\end{equation}
Furthermore, by \eqref{Fouriertrafo T} and \eqref{character} we get
\begin{equation}\label{parab-asymp}
T(h_t^\nu)=t^{-1/2}\sum_{j=0}^\infty b_j t^j.
\end{equation}
It remains to determine the asymptotic behavior of $T^\prime(h_t^\nu)$. This
will be done in the next sections.

\section{Heat kernel estimates}\label{sec-heatker}
\setcounter{equation}{0}

In this section we study the kernel $K^\nu(t,x,y)$ of $e^{-t\Delta_\nu}$. 
Observe that 
$K^\nu(t,x,y)\in\Hom((\widetilde E_\nu)_y,(\widetilde E_\nu)_x)$. Denote
by $|K^\nu(t,x,y)|$ the norm of this homomorphism.
Furthermore, let $r(x,y)$ denote the geodesic distance of $x,y\in\widetilde X$.
We have
\begin{prop}\label{prop-estim}
For every $T>0$ there exists $C>0$ such that for all $\nu\in\widehat K$ we have
\[
|K^\nu(t,x,y)|\le C t^{-d/2}  exp\left(-\frac{r^2(x,y)}{4t}\right)
\]
for all $0<t\le T$ and $x,y\in\widetilde X$, where $d=\dim\widetilde X$.
\end{prop}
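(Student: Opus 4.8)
The plan is to establish the Gaussian upper bound for the kernel $K^\nu(t,x,y)$ of $e^{-t\Delta_\nu}$ uniformly in $\nu\in\widehat K$ by exploiting the decomposition \eqref{heat-bl}, namely $e^{-t\Delta_\nu}=e^{-t\nu(\Omega_K)}e^{-tA_\nu}$, together with the relation $\Delta_\nu=A_\nu+\nu(\Omega_K)$ where $\nu(\Omega_K)$ acts as a scalar (for irreducible $\nu$). Since $\nu(\Omega_K)\ge 0$, the factor $e^{-t\nu(\Omega_K)}$ only improves the estimate, so it suffices to bound the kernel $\widetilde H^\nu(t,x,y)$ of $e^{-tA_\nu}$ on the universal cover $\widetilde X$; the point is to make the constant $C$ independent of $\nu$. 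The natural route is to compare $A_\nu$ with a scalar Laplacian. By the Weitzenböck/Lichnerowicz-type identity, $A_\nu=-R_\Gamma(\Omega)$ differs from the Bochner Laplacian $\Delta_\nu$ by the scalar $\nu(\Omega_K)$, and one knows the scalar heat kernel on $\bH^d$ explicitly.

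First I would reduce to the homogeneous setting on $\widetilde X=G/K$, where by $G$-invariance the kernel $\widetilde H^\nu_t(g_1,g_2)$ depends only on $g_1^{-1}g_2$ and, via the radial decomposition \eqref{radcomp}, effectively only on the geodesic distance $r(x,y)=t(g_1^{-1}g_2)$ (cf.\ \eqref{dist}). The key analytic input is a domination principle: the pointwise norm $|\widetilde H^\nu(t,x,y)|$ of the bundle-valued heat kernel is bounded by the scalar heat kernel $p_t^{(c)}(x,y)$ of the operator $-\Delta_{\bH^d}+c$ for a suitable constant $c$ controlling the zeroth-order (curvature/endomorphism) term of $A_\nu$. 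This follows from Kato's inequality combined with the maximum principle for heat semigroups: if $u=\widetilde H^\nu(t,\cdot,y)\phi$ solves the bundle heat equation, then $|u|$ is a subsolution of the scalar heat equation with potential, so it is dominated by the corresponding scalar solution. The uniformity in $\nu$ comes from the fact that, after extracting the scalar factor $e^{-t\nu(\Omega_K)}$, the remaining endomorphism term arising from the $\Ad$-action is bounded independently of the highest weight of $\nu$ in the relevant direction, so a single constant $c$ works for all $\nu$.

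The explicit scalar bound then finishes the argument: the heat kernel of the Laplacian on $\bH^d$ (with $d$ odd, $d=2n+1$) has the classical closed form, and the standard estimate gives, for $0<t\le T$,
\[
p_t^{\bH^d}(x,y)\le C\,t^{-d/2}\exp\left(-\frac{r^2(x,y)}{4t}\right),
\]
where $C$ depends only on $T$ and $d$; the extra potential $c$ and the factor $e^{-t\nu(\Omega_K)}$ only contribute bounded multiplicative constants on the compact time interval $0<t\le T$ and can be absorbed into $C$. Combining the domination principle with this explicit bound yields the claimed inequality with a constant $C$ depending only on $T$, uniformly in $\nu\in\widehat K$.

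I expect the main obstacle to be proving the domination principle with a constant genuinely uniform in $\nu$. While Kato's inequality gives the subsolution property for each fixed $\nu$, one must verify that the zeroth-order endomorphism term in the Weitzenböck formula for $A_\nu$ is bounded below by a $\nu$-independent constant after removing the scalar $\nu(\Omega_K)$; this requires a careful examination of how the Casimir $\Omega$ acts through the representation $\nu$ and a uniform control of the curvature term of the canonical connection $\widetilde\nabla^\nu$. Once this uniform lower bound on the potential is secured, the comparison with the explicit hyperbolic scalar kernel is routine, and the passage from $\widetilde X$ down to the estimate on $X$ is immediate since the bound is already stated on the universal cover.
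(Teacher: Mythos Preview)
Your overall strategy---domination by the scalar heat kernel via Kato's inequality, followed by the classical Gaussian bound for the heat kernel on $\bH^d$---is correct and is the standard route (the paper itself simply cites \cite[Proposition~3.2]{Mu1} and remarks that irreducibility of $\nu$ is not used there). However, the detour through $A_\nu$ is both unnecessary and misleading. You want to bound $K^\nu$, the kernel of $e^{-t\Delta_\nu}$, and $\Delta_\nu=(\nabla^\nu)^*\nabla^\nu$ is already a \emph{pure connection Laplacian} with no zeroth-order endomorphism term whatsoever. Kato's inequality for connection Laplacians therefore yields $|K^\nu(t,x,y)|\le p_t(x,y)$ directly, where $p_t$ is the scalar heat kernel on $\bH^d$, and this bound is automatically uniform in $\nu$; no analysis of curvature terms or Weitzenb\"ock remainders is required. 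The ``main obstacle'' you anticipate simply does not arise.

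By contrast, your proposed route---bound $\widetilde H^\nu$ uniformly in $\nu$ and then use $K^\nu=e^{-t\nu(\Omega_K)}\widetilde H^\nu$---would in fact fail if taken literally: since $\widetilde H^\nu=e^{t\nu(\Omega_K)}K^\nu$ and $\nu(\Omega_K)\to\infty$ along $\widehat K$, there is no $\nu$-uniform Gaussian bound on $\widetilde H^\nu$. The zeroth-order term of $A_\nu$ relative to the scalar Laplacian is precisely the unbounded scalar $-\nu(\Omega_K)$, and ``removing'' it just brings you back to $\Delta_\nu$. So drop the intermediate step, apply Kato directly to $\Delta_\nu$, and invoke the explicit hyperbolic bound $p_t(r)\le C\,t^{-d/2}e^{-r^2/(4t)}$ for $0<t\le T$; the proof is then complete and uniform in $\nu$ without further work.
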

\begin{proof}
If $\nu$ is irreducible, this is proved in \cite[Proposition 3.2]{Mu1}. 
However, the proof does not make any use of the irreducibility of $\nu$. 
So it extends without any change to the case of finite-dimensional 
representations.
\end{proof}
By \eqref{BLO1} the kernel $\widetilde H^\nu(t,x,y)$ of $e^{-tA_\nu}$ 
is closely related to the kernel $K^\nu(t,x,y)$. If $\nu$ is irreducible, 
$\nu(\Omega_K)$ is a scalar and we have
\begin{equation}\label{kernel-rela}
\widetilde H^\nu(t,x,y)=e^{t\nu(\Omega_K)}K^\nu(t,x,y).
\end{equation}
Let $h_t^\nu\in \mathcal{C}^1(G)$ be defined by \eqref{trace4}.
Note that for each $g\in G$, $L_g\colon E_x\to E_{gx}$ is an isometry.
Thus using \eqref{kernel6}, the definition of $H^\nu_t$, 
\eqref{kernel-rela}, and Proposition \ref{prop-estim}, we get
\begin{corollary}\label{estim3}
Let $d=\dim\widetilde X$. For all $T>0$ there exists $C>0$ such that we have
\[
|h_t^\nu(g)|\le C  t^{-d/2} \exp\left(-\frac{r^2(gx_0,x_0)}{4t}\right)
\]
for all $0<t\le T$ and $g\in G$.
\end{corollary}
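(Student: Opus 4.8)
The plan is to derive Corollary \ref{estim3} directly from Proposition \ref{prop-estim} by unwinding the chain of definitions that relate the scalar test function $h_t^\nu$ on $G$ to the heat kernel $K^\nu(t,x,y)$ of $e^{-t\Delta_\nu}$ on $\widetilde X$, keeping careful track of how each identification preserves operator norms. First I would recall that by \eqref{convol} and \eqref{kernel6} we have $H_t^\nu(g)=\widetilde H_t^\nu(e,g)=L_e^{-1}\circ\widetilde H^\nu(t,x_0,gx_0)\circ L_g$, so that $H_t^\nu(g)$ is the endomorphism of $V_\nu\cong\widetilde E_{x_0}$ obtained from the kernel element $\widetilde H^\nu(t,x_0,gx_0)\in\Hom((\widetilde E_\nu)_{gx_0},(\widetilde E_\nu)_{x_0})$ by precomposing with the left-translation isomorphism $L_g\colon\widetilde E_{x_0}\to\widetilde E_{gx_0}$.

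The key observation, already flagged in the statement, is that $L_g$ is an \emph{isometry} of fibres: the metric $\widetilde B_\nu$ is $G$-invariant, so $L_g$ preserves fibre norms and hence $|L_g^{-1}\circ\widetilde H^\nu(t,x_0,gx_0)\circ L_g|=|\widetilde H^\nu(t,x_0,gx_0)|$, the homomorphism norm. Therefore $|H_t^\nu(g)|=|\widetilde H^\nu(t,x_0,gx_0)|$. Next I would invoke \eqref{kernel-rela}, namely $\widetilde H^\nu(t,x,y)=e^{t\nu(\Omega_K)}K^\nu(t,x,y)$ (valid since for irreducible $\nu$ the operator $\nu(\Omega_K)$ is scalar), to replace the kernel of $e^{-tA_\nu}$ by that of $e^{-t\Delta_\nu}$. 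The scalar factor $e^{t\nu(\Omega_K)}$ is bounded on the interval $0<t\le T$ by a constant depending only on $T$ and the eigenvalue $\nu(\Omega_K)$, and applying Proposition \ref{prop-estim} to $K^\nu(t,x_0,gx_0)$ gives $|\widetilde H^\nu(t,x_0,gx_0)|\le C\,t^{-d/2}\exp(-r^2(x_0,gx_0)/(4t))$ for a suitable $C>0$.

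Finally, passing from the matrix norm of $H_t^\nu(g)$ to the scalar $h_t^\nu(g)=\tr H_t^\nu(g)$ via \eqref{trace4} costs only a factor of $\dim V_\nu$, since $|\tr A|\le(\dim V_\nu)\,|A|$ for the operator norm; absorbing this into the constant and using $r(gx_0,x_0)=r(x_0,gx_0)$ yields the claimed bound
\[
|h_t^\nu(g)|\le C\,t^{-d/2}\exp\!\left(-\frac{r^2(gx_0,x_0)}{4t}\right),\qquad 0<t\le T,\ g\in G.
\]
There is no serious obstacle here: the content is entirely contained in Proposition \ref{prop-estim}, and the corollary is a bookkeeping exercise in transporting the Gaussian bound through the isomorphisms \eqref{isomor1}, \eqref{kernel6}, \eqref{convol} and the relation \eqref{kernel-rela}. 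The only point requiring a word of care is the invariance of the fibre metric guaranteeing that $L_g$ is an isometry, which is exactly why the uniform constant $C$ can be chosen independent of $g$ rather than degenerating as $gx_0$ moves off into a cusp.
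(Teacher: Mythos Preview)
Your proof is correct and follows exactly the approach the paper itself uses: the paper's justification of the corollary is simply the sentence ``Note that for each $g\in G$, $L_g\colon E_x\to E_{gx}$ is an isometry. Thus using \eqref{kernel6}, the definition of $H^\nu_t$, \eqref{kernel-rela}, and Proposition \ref{prop-estim}, we get\ldots'', and you have unpacked precisely these steps. One harmless slip: in your ``key observation'' line you wrote $L_g^{-1}\circ\widetilde H^\nu(t,x_0,gx_0)\circ L_g$, whereas the correct expression (as you wrote one line earlier) is $L_e^{-1}\circ\widetilde H^\nu(t,x_0,gx_0)\circ L_g$; since both $L_e$ and $L_g$ are isometries your conclusion $|H_t^\nu(g)|=|\widetilde H^\nu(t,x_0,gx_0)|$ is unaffected.
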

Next we turn to the asymptotic expansion of the heat kernel. Let 
$d_{\bx}\exp_{x_0}$ be the differential of the exponential map $\exp_{x_0}\colon
T_{x_0}\widetilde X\to\widetilde X$ at the point 
$\bx\in T_{x_0}\widetilde X$. It is a map
from $T_{x_0}\widetilde X$ to $T_x\widetilde X$, where $x=\exp_{x_0}(\bx)$. Let
\begin{equation}\label{jacob1}
j(\bx):=|\det(d_{\bx}\exp_{x_0})|
\end{equation}
be the Jacobian, taken with respect to the inner products in the tangent spaces.
Note that
\begin{equation}\label{jacob2}
j(\bx)=|\det(g_{ij}(\bx))|^{1/2}.
\end{equation}
Write $y=\exp_x(\by)$, with $\by\in T_x\tilde X$. Let $\ve>0$ be sufficiently
small. Let $\psi\in C^\infty(\R)$ with $\psi(u)=1$ for $u<\ve$ and 
$\psi(u)=0$ for $u>2\ve$.

\begin{prop}\label{prop-asympexp}
Let $d=\dim\widetilde X$. Let $(\nu,V_\nu)$
be a finite-dimensional unitary representation of $K$. There
exist smooth sections $\Phi_i^\nu\in C^\infty(\widetilde X\times\tilde X,
\widetilde E_\nu\boxtimes \widetilde E^*_\nu)$, $i\in\N_0$, such that for 
every $N\in\N$
\begin{equation}\label{asympexp}
\begin{split}
K^\nu(t,x,y)=t^{-d/2}\psi(d(x,y))\exp\left(-\frac{r^2(x,y)}{4t}\right)
\sum_{i=0}^N\Phi_i^\nu(x,y&) j(x,y)^{-1/2}t^i\\
&+O(t^{N+1-d/2}),
\end{split}
\end{equation}
uniformly for  $0< t\le 1$. Moreover the leading term $\Phi_0^\nu(x,y)$ is
equal to the parallel transport $\tau(x,y)\colon (\widetilde E_\nu)_y\to 
(\widetilde E_\nu)_x$ with respect to the connection $\nabla^\nu$ along the 
unique geodesic joining $x$ and $y$. 
\end{prop}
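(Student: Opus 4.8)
The plan is to construct a parametrix for the heat kernel of $A_\nu$ by Hadamard's method and then to control the remainder by Duhamel's principle, using the Gaussian off-diagonal bound of Proposition \ref{prop-estim}. Since $\widetilde X$ is the simply connected symmetric space $\bH^d$, it has no conjugate points and $\exp_{x_0}$ is a diffeomorphism; hence $r^2(x,y)$, the Jacobian $j(x,y)$, and the parallel transport $\tau(x,y)$ along the unique geodesic joining $x$ and $y$ are all globally smooth on $\widetilde X\times\widetilde X$, with $j>0$. First I would make the ansatz
\begin{equation*}
P_N(t,x,y):=t^{-d/2}\exp\left(-\frac{r^2(x,y)}{4t}\right)\sum_{i=0}^N \Phi_i^\nu(x,y)\,j(x,y)^{-1/2}\,t^i
\end{equation*}
and insert it into the heat equation $(\partial_t+\Delta_{\nu,x})P_N=0$. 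Collecting equal powers of $t$ yields the standard chain of transport equations for the coefficients along the radial geodesics emanating from $y$.

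These transport equations are first-order ordinary differential equations along geodesics. The leading one, after absorbing the factor $j^{-1/2}$ produced by the radial Laplacian of $r^2$ (via \eqref{jacob2}), reduces to $\nabla^\nu_{\partial_r}\Phi_0^\nu=0$ with the normalization $\Phi_0^\nu(x,x)=\Id$, whose solution is precisely $\Phi_0^\nu(x,y)=\tau(x,y)$; this establishes the assertion about the leading term. The higher coefficients satisfy inhomogeneous equations of the form $(\nabla^\nu_{\partial_r}+\psi_i)\Phi_i^\nu=-j^{1/2}\Delta_\nu(\Phi_{i-1}^\nu j^{-1/2})$ with smooth scalar weights $\psi_i$, and are solved successively by integrating factors. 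Because the right-hand sides are smooth and $\widetilde X$ is Cartan--Hadamard, each $\Phi_i^\nu$ is a globally defined smooth section of $\widetilde E_\nu\boxtimes\widetilde E_\nu^*$.

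Multiplying by the cutoff $\psi(r(x,y))$ localizes $P_N$ near the diagonal and produces
\begin{equation*}
\left(\partial_t+\Delta_{\nu,x}\right)\bigl(\psi\,P_N\bigr)=t^{N-d/2}\,\psi(r)\,e^{-r^2/4t}\,R_N(x,y)+E_\psi(t,x,y),
\end{equation*}
where $R_N$ is smooth and $E_\psi$ is supported in the shell $\ve\le r(x,y)\le 2\ve$ on which $\psi'\neq0$; there $r$ is bounded away from $0$, so $e^{-r^2/4t}$ forces $E_\psi=O(t^\infty)$ uniformly. Then I would invoke Duhamel's formula,
\begin{equation*}
K^\nu(t,x,y)-\psi\,P_N(t,x,y)=-\int_0^t\!\!\int_{\widetilde X}K^\nu(t-s,x,z)\,\bigl(\partial_s+\Delta_{\nu,z}\bigr)\bigl(\psi P_N\bigr)(s,z,y)\,dz\,ds,
\end{equation*}
and estimate the right-hand side with the Gaussian bound of Proposition \ref{prop-estim} together with the explicit Gaussian factor in $\psi P_N$. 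The spatial convolution of the two Gaussians obeys the semigroup identity, and the $s$-integration of the source power $s^{N-d/2}$ gains one power of $t$, which yields the remainder bound $O(t^{N+1-d/2})$ claimed in \eqref{asympexp}. Uniformity in $x,y$ is automatic: since $A_\nu$ is $G$-invariant, by \eqref{isomor2} the kernel depends only on the relative position of $x$ and $y$, and all estimates reduce to a neighborhood of $x_0$.

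The step I expect to be the main obstacle is the uniform control of the Duhamel remainder, namely verifying that the constants in the Gaussian estimates and in the convolution gain are independent of $(x,y)$ and that the iteration closes with a bound uniform down to $t\to 0$. This is exactly where the homogeneity of $\widetilde X$ and the $G$-invariance of the entire construction are essential, reducing the apparently noncompact, position-dependent bookkeeping to a single local model near $x_0$.
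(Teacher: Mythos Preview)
Your approach is correct in outline and would yield the proposition, but it is genuinely different from what the paper does. The paper's proof is a two-line reduction: it picks a torsion-free cocompact lattice $\Gamma'\subset G$ (which exists by Borel), passes to the compact quotient $\Gamma'\backslash\widetilde X$, invokes the asymptotic expansion on compact manifolds from \cite[Theorem~2.30]{BGV}, and then lifts back to $\widetilde X$ following Donnelly \cite[Sect.~3]{Do}. In other words, the paper outsources both the parametrix construction and the remainder estimate to the compact setting, where they are classical, and uses the covering trick to transfer them.

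Your route --- Hadamard parametrix directly on the Cartan--Hadamard space, transport equations for the $\Phi_i^\nu$, then Duhamel with the Gaussian bound of Proposition~\ref{prop-estim} --- is more self-contained and makes the identification $\Phi_0^\nu=\tau(x,y)$ transparent. The cost is exactly the point you flag: the convolution estimate in the Duhamel step is not a literal semigroup identity for Gaussians on $\bH^d$, and one must argue carefully (e.g.\ via the triangle inequality $r(x,z)+r(z,y)\ge r(x,y)$ and the compact support of $R_N(\cdot,y)$, reduced to $y=x_0$ by $G$-invariance) to get the clean $O(t^{N+1-d/2})$ gain. This is doable, but it is real work that the paper avoids by passing to a compact quotient. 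Conversely, your argument does not need the existence of a cocompact lattice, and it generalizes more readily to homogeneous spaces where such lattices may be unavailable.
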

\begin{proof}
Let $\Gamma\subset G$ be a co-compact torsion free lattice. It exists by
\cite{Bo}. Let $X=\Gamma\bs \widetilde X$ and $E_\nu=\Gamma\bs 
\widetilde E_\nu$. As in \cite[Sect 3]{Do}, the proof can be reduced to the 
compact case, which follows from \cite[Theorem 2.30]{BGV}. 
\end{proof}

Let $\pf$ be as in \eqref{cartan}. We recall that the mapping 
\[
\varphi\colon  \pf\times K\to G,
\]
defined by $\varphi(Y,k)=\exp(Y)\cdot k$ is a diffeomorphism 
\cite[Ch. VI, Theorem 1.1]{He}. Thus each $g\in G$ can be uniquely
written as 
\begin{equation}\label{cartan2}
g=\exp(Y(g))\cdot k(g),\quad Y(g)\in\pf,\; k(g)\in K.
\end{equation} 
Using Proposition \ref{prop-asympexp} and \eqref{kernel-rela}, we obtain the 
following corollary.
\begin{corollary}
There exist $a^\nu_i\in C^\infty(G)$ such that
\begin{equation}\label{asympexp1}
h^\nu_t(g)=t^{-d/2}\psi(d(gx_0,x_0))
\exp\left(-\frac{r^2(gx_0,x_0)}{4t}\right)
\sum_{i=0}^N a_i^\nu(g) t^i+O(t^{N+1-d/2})
\end{equation}
which holds for  $0<t\le1$. Moreover the leading 
coefficient $a_0^\nu$ is given by
\begin{equation}\label{leadcoeff}
a_0^\nu(g)=\tr(\nu(k(g)))\cdot j(x_0,gx_0)^{-1/2}.
\end{equation}
\end{corollary}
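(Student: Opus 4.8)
The plan is to unwind the definition of $h^\nu_t$ back to the heat kernel $K^\nu$, substitute the expansion of Proposition \ref{prop-asympexp}, and then extract the leading coefficient by identifying the parallel transport appearing there with the group action on the homogeneous bundle.

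First I would trace the definitions. By \eqref{convol} and \eqref{kernel6}, using $L_e=\Id$, $ex_0=x_0$, and the identification $\widetilde E_{x_0}\cong V_\nu$, one has
\[
H^\nu_t(g)=\widetilde H^\nu_t(e,g)=\widetilde H^\nu(t,x_0,gx_0)\circ L_g,
\]
so that $h^\nu_t(g)=\tr\left(\widetilde H^\nu(t,x_0,gx_0)\circ L_g\right)$. Since $\nu$ is irreducible, \eqref{kernel-rela} gives $\widetilde H^\nu(t,x_0,gx_0)=e^{t\nu(\Omega_K)}K^\nu(t,x_0,gx_0)$, and I would insert the expansion \eqref{asympexp} at $x=x_0$, $y=gx_0$. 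By \eqref{dist} the exponent and the cutoff both involve $r(x_0,gx_0)=r(gx_0,x_0)=t(g)$, which is exactly the shape of \eqref{asympexp1}. Because $t\mapsto e^{t\nu(\Omega_K)}$ is a scalar, holomorphic in $t$, and equal to $1$ at $t=0$, its Taylor series may be multiplied into the finite sum $\sum_i\Phi_i^\nu\,t^i$; this leaves the $i=0$ term untouched and only alters the higher coefficients. Taking traces then produces smooth coefficients $a_i^\nu\in C^\infty(G)$ together with the claimed remainder $O(t^{N+1-d/2})$.

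The heart of the argument is the evaluation of the leading coefficient, which by the above equals $a_0^\nu(g)=\tr\left(\tau(x_0,gx_0)\circ L_g\right)\cdot j(x_0,gx_0)^{-1/2}$, where $\tau(x_0,gx_0)\colon(\widetilde E_\nu)_{gx_0}\to(\widetilde E_\nu)_{x_0}$ is the $\nabla^\nu$-parallel transport furnished by Proposition \ref{prop-asympexp}. I would invoke the decomposition \eqref{cartan2}, writing $g=\exp(Y(g))\,k(g)$ with $Y(g)\in\pf$ and $k(g)\in K$. Then $gx_0=\exp(Y(g))x_0$ and the unique geodesic from $x_0$ to $gx_0$ is $s\mapsto\exp(sY(g))x_0$. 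For the canonical connection the lifted curve $s\mapsto[\exp(sY(g)),v]$ in $\widetilde E_\nu=G\times_\nu V_\nu$ is horizontal, since $Y(g)\in\pf$ and the horizontal distribution is precisely spanned by the $\pf$-directions; hence $\tau(x_0,gx_0)$ sends $[\exp(Y(g)),w]\mapsto[e,w]$. Using $L_g[e,v]=[g,v]=[\exp(Y(g))k(g),v]=[\exp(Y(g)),\nu(k(g))v]$, I conclude that $\tau(x_0,gx_0)\circ L_g$ acts on $V_\nu\cong(\widetilde E_\nu)_{x_0}$ as $\nu(k(g))$, so that $\tr\left(\tau(x_0,gx_0)\circ L_g\right)=\tr(\nu(k(g)))$, which yields \eqref{leadcoeff}.

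The step I expect to be the main obstacle is this last identification: one must check carefully that the abstract parallel transport entering the heat-kernel expansion coincides with the left-translation frame on the homogeneous bundle, that is, that the horizontal curves through $x_0$ for the canonical connection are exactly the $s\mapsto[\exp(sY),v]$ with $Y\in\pf$. Everything else, namely the substitution, the handling of the scalar factor $e^{t\nu(\Omega_K)}$, and the bookkeeping of the coefficients and the remainder, is routine once this geometric fact is established.
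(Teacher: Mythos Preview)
Your proposal is correct and follows essentially the same route as the paper: unwind $h^\nu_t$ to $\widetilde H^\nu(t,x_0,gx_0)\circ L_g$, insert the heat-kernel expansion of Proposition~\ref{prop-asympexp}, and compute the leading coefficient by identifying the parallel transport along $s\mapsto\exp(sY(g))x_0$ with $L_{\exp(Y(g))}$, so that $\Phi_0^\nu(x_0,gx_0)\circ L_g=\nu(k(g))$. Your treatment of the scalar factor $e^{t\nu(\Omega_K)}$ (expanding it and absorbing it into the higher $a_i^\nu$ while leaving $a_0^\nu$ unchanged) is in fact more explicit than the paper's own proof, which silently passes from $\widetilde H^\nu$ to the $\Phi_i^\nu$ of $K^\nu$; the horizontal-lift justification you flag as the ``main obstacle'' is exactly what the paper asserts in one line as ``the parallel transport along $\gamma(t)$ equals $L_{\exp(Y)}$''.
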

\begin{proof}
By \eqref{kernel6} and \eqref{convol} we have
\[
H^\nu_t(g)=H^\nu(t,x_0,gx_0)\circ L_g,\quad g\in G.
\]
Put
\begin{equation}\label{trace5}
a_i^\nu(g):=\tr(\Phi_i^\nu(x_0,gx_0)\circ L_g)\cdot j(x_0,gx_0)^{-1/2},
\quad g\in G.
\end{equation}
Then \eqref{asympexp1} follows immediately from \eqref{asympexp} and the 
definition of $h_t^\nu$. To prove the second statement, we recall that
$\Phi_0^\nu(x,y)$ is the parallel transport $\tau(x,y)$ 
with respect to the canonical
connection of $\widetilde E_\nu$ along the geodesic connecting $x$ and $y$.
Let $g=\exp(Y)\cdot k$, $Y\in\pf$, $k\in K$. Then the geodesic connecting 
$x_0$ and $gx_0$ is the curve $\gamma(t)=\exp(tY)x_0$, $t\in [0,1]$
(see \cite[Ch. IV, Theorem 3.3]{He}). The parallel transport along $\gamma(t)$
equals $L_{\exp(Y)}$. Thus $\Phi_0^\nu(x_0,gx_0)=L_{\exp(Y)}^{-1}$. Hence we get.
\[
\Phi_0^\nu(x_0,gx_0)\circ L_g=L_k=\nu(k).
\]
Together with \eqref{trace5} the claim follows.
\end{proof}

\section{Weighted orbital integrals}\label{sec-weightorb}
\setcounter{equation}{0}

The weighted orbital integral is given by \eqref{weigh-orb}. 
We apply this to $h^\nu_t$. By \eqref{transf2} it follows that $h^\nu_t$ is 
invariant under conjugation by $k\in K$. Thus we get
\begin{equation}\label{weigh-orb1}
T^\prime_P(h^\nu_t)=\int_Nh^\nu_t(n)\log\|n\|\,dn.
\end{equation}
We fix an isometric identification  of $\R^{d-1}$ with $\nf$ with respect to
the inner product $\langle\cdot,\cdot\rangle_\theta$ defined by
\[
\langle Y_1,Y_2 \rangle_\theta:=\langle Y_1,\theta(Y_2)\rangle,\quad 
Y_1,Y_2\in\gf.
\]
Explicitly it is given by
\begin{equation}\label{idendfic1}
x\in\R^{d-1}\mapsto Y(x):=\begin{pmatrix}0&-x&x\\x^T&0&0\\x^T&0&0\end{pmatrix},
\end{equation}
where we consider $x$ as a column. Furthermore we identify $\nf$
with $N$ via the exponential map. Put
\[
n(x):=\exp(Y(x))\in N,\quad x\in\R^{d-1}.
\]
We note that 
\begin{equation}\label{exponen}
n(x)=\Id+Y(x)+\frac{1}{2}Y(x)^2.
\end{equation}
Then we get
\begin{equation}\label{weightint}
T^\prime(h^\nu_t)=\int_{\R^{d-1}}h^\nu_t\left(n(x)\right)\log\|x\| \,dx.
\end{equation}
For $\ve>0$ let $B(\ve)\subset \R^{d-1}$ denote the ball of
radius $\ve$ centered at $0$ and let
\[
U(\ve):= \R^{d-1}-B(\ve).
\]
We decompose the integral as 
\[
\int_{\R^{d-1}}=\int_{B(\ve)}+\int_{U(\ve)}.
\]
Put
\begin{equation}\label{dist3}
r(x):=r(n(x)x_0,x_0),\quad x\in\R^{d-1}.
\end{equation}
We need some properties of this function.
\begin{lem}
We have
\begin{equation}\label{dist2}
r(x)=\arcosh\left(1+\frac{\|x\|^2}{2}\right),\quad x\in\R^{d-1}.
\end{equation}
\end{lem}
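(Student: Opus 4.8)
The plan is to compute the geodesic distance $r(x) = r(n(x)x_0, x_0)$ directly by using the isometry $t(g) = r(gx_0, x_0)$ from \eqref{dist}, where $t(g)$ is the radial component defined via the Cartan decomposition \eqref{radcomp}. Since $\arcosh$ is monotone on $[1,\infty)$, it suffices to verify the equivalent identity $\cosh r(x) = 1 + \tfrac{1}{2}\|x\|^2$. The natural way to extract $\cosh t(g)$ is through a matrix-entry computation in the defining representation of $G \subset \GL(d+1,\R)$, which converts the problem into linear algebra.

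First I would write out $n(x)$ explicitly using \eqref{exponen}, namely $n(x) = \Id + Y(x) + \tfrac{1}{2}Y(x)^2$, with $Y(x)$ given by \eqref{idendfic1}. A short computation gives $Y(x)^2$ and hence the $(d+1)\times(d+1)$ matrix $n(x)$ as an explicit function of $x$. Next I would relate $\cosh t(g)$ to a matrix entry: writing $a(t) = \exp(tH_1)$ acting in the standard representation on the last two coordinates as a hyperbolic rotation $\begin{pmatrix}\cosh t & \sinh t\\ \sinh t & \cosh t\end{pmatrix}$, and noting that $K = \SO(d)$ fixes the relevant base vector, one sees that for $g = k_1 a(t(g)) k_2$ the $(1,1)$-entry (or an analogous $K$-invariant combination) of the symmetric matrix $g\theta(g)^{-1} = g g^t$ picks out $\cosh(2t(g))$ up to normalization. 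Equivalently, and more cleanly, I would use that $x_0 = eK$ corresponds to a distinguished vector $v_0 \in \R^{d+1}$ and that $\cosh r(g x_0, x_0)$ is proportional to the Minkowski inner product $\langle g v_0, v_0\rangle$ in the hyperboloid model $\bH^d$.

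Concretely, in the hyperboloid model I would take $v_0 = (0,\dots,0,1)$ (or whatever base point the normalization of \eqref{killnorm} dictates) and use the standard fact that $\cosh r(p,q) = -\langle p, q\rangle$ for the Lorentzian form of signature $(d,1)$. Then $\cosh r(x) = \langle n(x) v_0, v_0 \rangle$ reduces to reading off a single entry of the matrix $n(x)$ computed in the first step. The block structure of $Y(x)$ in \eqref{idendfic1} — where the last two rows/columns carry the $x$ and $x^T$ blocks — is arranged precisely so that this entry equals $1 + \tfrac{1}{2}\|x\|^2$, matching the $\tfrac{1}{2}Y(x)^2$ term. This yields $\cosh r(x) = 1 + \tfrac{1}{2}\|x\|^2$ and hence \eqref{dist2} after applying $\arcosh$.

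The main obstacle, and the only genuinely delicate point, is bookkeeping the normalization: the metric is fixed by \eqref{killnorm} to have curvature $-1$, so I must confirm that $H_1$ (the unit vector of \eqref{killnorm}) acts in the defining representation exactly as the hyperbolic rotation with parameter $t$ rather than some rescaling $ct$, and likewise that the identification \eqref{idendfic1} uses the correct inner product $\langle\cdot,\cdot\rangle_\theta$ so that $\|Y(x)\| = \|x\|$. Getting the factor of $\tfrac{1}{2}$ correct hinges entirely on these normalizations being consistent, so I would verify the action of $H_1$ on the standard basis vectors and check the length of $Y(x)$ against $\|x\|$ before reading off the matrix entry. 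Once the normalizations are pinned down, the identity \eqref{dist2} is immediate from the explicit form of $n(x)$.
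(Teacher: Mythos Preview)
Your approach is correct and shares the same first step as the paper: both invoke \eqref{dist} to identify $r(x)$ with the radial component $t(n(x))$ of the Cartan decomposition. The paper then simply cites \cite[Lemma~7.1]{Wa1} for the identity $t(n(x))=\arcosh(1+\|x\|^2/2)$ and is done. You instead propose to carry out the computation behind that citation explicitly, via the hyperboloid model and the matrix form \eqref{exponen}: computing $Y(x)^2$ from \eqref{idendfic1} shows that the $(d+1,d+1)$ entry of $n(x)$ is exactly $1+\tfrac12\|x\|^2$, which is $-\langle n(x)v_0,v_0\rangle=\cosh r(x)$ for $v_0=e_{d+1}$ in the standard Lorentzian form. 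The normalization checks you flag (that $H_1$ generates the standard hyperbolic rotation and that $\|Y(x)\|_\theta=\|x\|$) are real but routine under \eqref{killnorm} and \eqref{idendfic1}. So your argument is a self-contained replacement for the citation, trading a reference for a short linear-algebra computation; nothing is missing.
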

\begin{proof}
Let $t(x)\ge 0$ be the radial component of $n(x)$, defined by \eqref{radcomp}.
By \cite[Lemma 7.1]{Wa1} we have
\[
t(x)=\arcosh\left(1+\frac{\|x\|^2}{2}\right).
\]
The lemma follows from \eqref{dist}.
\end{proof}
Now note that
\[
\arcosh(x)=\ln\left(x+\sqrt{x^2-1}\right),\quad x\ge 1.
\]
Thus
\begin{equation}\label{log-est}
r(x)\ge \ln\left(1+\frac{\|x\|^2}{2}\right),\quad x\in\R^{d-1}.
\end{equation}
This implies that for all $t>0$ we have
\begin{equation}\label{int1}
\int_{\R^{d-1}}\exp\left(-\frac{r^2(x)}{4t}\right)|\log\|x\||\,dx <\infty.
\end{equation}
Using $\arcosh^\prime(x)=(x^2-1)^{-1/2}$, we get
\[
\frac{d}{dr}\arcosh\left(1+\frac{r^2}{2}\right)=\frac{1}{\sqrt{1+r^2/4}},\quad
r\ge 0.
\]
Thus
\begin{equation}\label{inequal}
r(x_1)>r(x_2), \quad\text{if}\;\;\|x_1\|>\|x_2\|.
\end{equation}

Now observe that $\arcosh(1+x^2/2)$ has a Taylor series expansion of the form
\[
\arcosh\left(1+\frac{x^2}{2}\right)=x+\sum_{k=1}^\infty a_k x^{2k+1}
\]
which converges for $|x|<1/2$. This follows from \cite[1.631.2]{GR} together
\cite[1.641.2]{GR}. Thus $r^2(x)$ is $C^\infty$ and we have
\begin{equation}\label{distsq}
r^2(x)=\|x\|^2+R(x)
\end{equation} 
with 
\[
|R(x)|\le C\|x\|^4,\quad \|x\|\le 1/4.
\]
Thus $r^2(x)$ has a non-degenerate critical point at $x=0$ of index $(d-1,0)$.

Now we turn to the estimation of the orbital integral. Put
\[
c(\varepsilon):=\arcosh(1+\varepsilon^2/2).
\]
By Corollary \ref{estim3}, \eqref{int1} and \eqref{inequal} we get 
\begin{equation}\label{estima5}
\begin{split}
\left|\int_{U(\ve)}h^{\nu}_t(n(x))\log\|x\|\,dx\right|&\le C t^{-d/2} 
\int_{U(\ve)}\exp\left(-\frac{r^2(x)}{4t}\right)|\log\|x\||
dx\\
&\le C_1t^{-d/2}\exp\left(-\frac{c(\varepsilon)}{8t}\right)
\end{split}
\end{equation}
for $0< t\le 1$. To deal with the integral over $B(\ve)$, we use 
\eqref{asympexp1}. This gives
\begin{equation}\label{asympexp2}
\begin{split}
\int_{B(\ve)}h^\nu_t(n(x))\log\|x\|\,dx=&t^{-d/2}\sum_{i=1}^N 
t^i\int_{B(\ve)}\exp\left(-\frac{r^2(x)}{4t}\right)\psi(x)a_i^\nu(x)
\log\parallel x\parallel dx\\
&+O(t^{N+1-d/2})
\end{split}
\end{equation}
for $0< t\le 1$, where $\psi\in C^\infty_c(B(\ve))$. Put $m=d-1$. The integrals 
on the right hand side are of the form
\begin{equation}\label{integr4}
I(\lambda)=\int_{\R^m}e^{-\lambda f(x)}g(x)\log\parallel x\parallel dx,
\end{equation}
where $\lambda>0$, $g\in C^\infty_c(\R^m)$, $\supp g\subset B(\ve)$, 
and $f$  satisfies
\begin{equation}\label{bound}
f(x)=\|x\|^2+R(x),\quad |R(x)|<C\|x\|^4,\;\;\|x\|<\ve,
\end{equation}
and $f$ has no critical points in $\supp g\setminus\{0\}$.
Our goal is to derive an 
asymptotic expansion for $I(\lambda)$ as $\lambda\to \infty$. To begin with,  
we first 
show that $f$ can be replaced by $\|x\|^2$. We proceed as in H\"ormander's
 proof of the stationary phase approximation \cite[Theorem 7.7.5]{Hor}.
On $B(\ve)\setminus\{0\}$ we consider the following differential operator
\begin{equation}\label{diffop1}
L:=-\frac{1}{\|f^\prime(x)\|^2}\sum_{j=1}^m\frac{\partial f}{\partial x_j}
\frac{\partial}{\partial x_j}.
\end{equation}
Note that the formally adjoint operator $L^*$ is given by
\begin{equation}\label{diffop2}
L^*=\sum_{j=1}^m\frac{\partial}{\partial x_j}\frac{1}{\|f^\prime(x)\|^2}
\frac{\partial f}{\partial x_j},
\end{equation}
where the factors act as multiplication operators. Since integration by parts
arguments will introduce singularities, we make some more general assumptions.
Suppose that $g\in C^1(\R^m\setminus\{0\})$ with support in $B(\ve)$ and with
$|D^\alpha g|\le C\|x\|^{3-|\alpha|}$ for $|\alpha|\le 1$. Using the divergence
theorem in the last step, we have
\[
\begin{split}
I(\lambda)&=\lim_{r\to 0}\int_{\|x\|\ge r}e^{-\lambda f(x)}g(x)\log\|x\|\;dx=
\lambda^{-1}\lim_{r\to 0}\int_{\|x\|\ge r}(Le^{-\lambda f(\cdot)})(x)g(x)\log\|x\|\;dx\\
&=\lambda^{-1}\lim_{r\to 0}\int_{\|x\|\ge r}e^{-\lambda f(x)}L^*(g\log\|\cdot\|)(x)\;dx
\\
& \quad+\lambda^{-1}\lim_{r\to 0}\int_{\|x\|=r}\|f^\prime(x)\|^{-2}
\langle\nu,\nabla f\rangle(x) e^{-\lambda f(x)}g(x)\log\|x\|\;dS(x),
\end{split}
\]
where $\nu$ is the Durward unit normal vector field to 
$\partial(\R^m\setminus B(r))$. By the assumption on $f$, there exists $C>0$
such that
\[
\|f^\prime(x)\|^{-2}\le C\|x\|^{-2},\quad \|\nabla f(x)\|\le C\|x\|.
\]
Together with the assumptions on $g$, it follows that the integrand in the
surface integral is bounded on $B(\ve)$. Thus the surface integral has limit
$0$ as $r\to 0$. Furthermore, by \eqref{diffop2} we have
\[
L^*(g\log\|\cdot\|)(x)=\sum_{j=1}^m\frac{\partial}{\partial x_j}
\left(\frac{1}{\|f^\prime(x)\|^2}\frac{\partial f}{\partial x_j}(x)g(x)\log\|x\|
\right).
\]
Using the assumptions on $f$ and $g$, it follows that $L^*(g\log\|\cdot\|)$ is 
bounded, and therefore 
\[
I(\lambda)=\lambda^{-1}\lim_{r\to 0}\int_{\|x\|\ge r}e^{-\lambda f(x)}
L^*(g\log\|\cdot\|)(x)\;dx=\lambda^{-1}\int_{\R^m}e^{-\lambda f(x)}
L^*(g\log\|\cdot\|)(x)\;dx.
\]
Using the properties of $f$ and $g$, we get
\[
|I(\lambda)|\le C(f)\lambda^{-1}\sum_{|\alpha|\le 1}
\sup_{x\in \R^m}|D^\alpha (g(x)\log\|x\|)\cdot\|x\|^{-2+|\alpha|}|.
\]
Now let $k\in\N$ and assume that $g\in C^k(\R^m\setminus\{0\})$ has support in
$B(\ve)$ and satisfies 
\[
|D^\alpha g(x)|\le C \|x\|^{2k+1-|\alpha|},\quad \text{for}\quad|\alpha|\le k.
\]
Let $u(x)=g(x)\log\|x\|$, $x\in\R^m\setminus\{0\}$. Then $u\in 
C^k(\R^m\setminus\{0\})$, with support in
$B(\ve)$ and with $|D^\alpha u(x)|\le C \|x\|^{2k-|\alpha|}$ for $|\alpha|\le k$.
Then it follows that $L^*(u)\in C^{k-1}(\R^m\setminus\{0\})$,  with support in
$B(\ve)$ and with $|D^\alpha L^*(u)(x)|\le C \|x\|^{2(k-1)-|\alpha|}$ for 
$|\alpha|\le k$. Thus we can proceed by induction to conclude that
\begin{equation}\label{estim6}
|I(\lambda)|\le C(f)\lambda^{-k}\sum_{|\alpha|\le k}\sup_{x\in\R^m}
\left|D^\alpha( g(x)\log\|x\|)\cdot\|x\|^{-2k+|\alpha|}\right|.
\end{equation}
We introduce the following auxiliary functions 
\begin{equation}\label{auxili}
f_s(x):=\|x\|^2+sR(x),\quad s\in[0,1]. 
\end{equation}
Then $f_1=f$. Let
\[
I(\lambda,s):=\int_{\R^m} e^{-\lambda f_s(x)}g(x)\log\|x\|\,dx.
\]
Differentiating in $s$ 2k times yields
\begin{equation}\label{deriv1}
I^{(2k)}(\lambda,s)=\lambda^{2k}\int_{\R^m}e^{-\lambda f_s(x)} R(x)^{2k}g(x)\log\|x\|
\;dx.
\end{equation}
Now let $g\in C^\infty(\R^m)$ with $\supp g\subset B(\ve)$. 
Let $u(x)=R(x)^{2k}g(x)\log\|x\|$. Then $u\in C^\infty(\R^m\setminus\{0\})$
with support in $B(\ve)$ and by \eqref{bound} it follows that 
$|D^\alpha u(x)|\le C \|x\|^{6k-|\alpha|}$, $\|x\|<\ve$. Applying \eqref{estim6}
with $3k$ in place of $k$, and \eqref{deriv1}, we get
\[
|I^{2k}(\lambda,s)|\le C\lambda^{-k}.
\]
By Taylor's theorem, we have
\begin{equation}
\left|I(\lambda,1)-\sum_{j<2k}\frac{1}{j!}I^{(j)}(\lambda,0)\right|\le
\sup_{s\in[0,1]}\frac{1}{(2k)!}\left| I^{2k}(\lambda,s)\right|.
\end{equation}
Thus modulo $\lambda^{-k}$ decay, it suffices to study the asymptotic expansion
of $I^{(j)}(\lambda,0)$. Each of these integrals is of the form
\begin{equation}\label{quadratic}
I_0(\lambda)=\int_{\R^m}e^{-\lambda\|x\|^2}g(x)\log\|x\|\;dx,
\end{equation}
where $g\in C^\infty_c(\R^m)$ with support in $B(\ve)$. Let $N\in\N$. By
Taylor's theorem we have
\[
\left|g(x)-\sum_{|\alpha|\le N}\frac{(D^\alpha g)(0)}{\alpha!}x^\alpha\right|\le
\left(\sum_{|\alpha|=N+1}\frac{1}{\alpha!}\sup_{x\in\R^m}|(D^\alpha g)(x)|\right)
\cdot\|x\|^{N+1}
\]
for $x\in B(\ve)$. Now for $\lambda\ge 1$ we have
\[
\begin{split}
\left|\int_{B(\ve)} e^{-\lambda\|x\|^2}\|x\|^{N+1}\log\|x\|\;dx\right|&\le 
\int_{\R^m}e^{-\lambda\|x\|^2}\|x\|^{N+1}\left|\log\|x\|\right|\;dx\\
&=O(\lambda^{-(m+N+1)/2}(1+\log\lambda)).
\end{split}
\]
Furthermore, for $\lambda\ge 1$ we have
\[
\int_{B(\ve)}e^{-\lambda\|x\|^2}x^\alpha\log\|x\|\;dx=\int_{\R^m}
e^{-\lambda\|x\|^2}x^\alpha\log\|x\|\;dx + O(e^{-\ve\lambda/2}).
\]
Changing variables $x\mapsto x/\sqrt{\lambda}$ in the integral on the right
hand side, we get
\[
\begin{split}
\int_{\R^m}e^{-\lambda\|x\|^2}x^\alpha\log\|x\|\;dx&=c_1\lambda^{-m/2-|\alpha|/2}
+c_2\lambda^{-m/2-|\alpha|/2}\log\lambda
\end{split}
\]
for some constants $c_1$ and $c_2$. Summarizing, it follows that for every
$N\in\N$ we have an expansion 
\begin{equation}\label{asympexp3}
I(\lambda)=\sum_{j=0}^N a_j\lambda^{-m/2-j/2}\log\lambda +\sum_{j=0}^N b_j
\lambda^{-m/2-j/2}+O(\lambda^{-(m+N)/2})
\end{equation}
as $\lambda\to\infty$. Now the integrals on the right hand side of 
\eqref{asympexp2} are of the form $I(1/(4t))$. Combining \eqref{weightint},
\eqref{estima5}, \eqref{asympexp2}, and  \eqref{asympexp3}, and using that
$m=d-1$, we get

\begin{prop}\label{asympexp4}
For every $N\in\N$ we have
\[
T_P^\prime(h^\nu_t)=\sum_{j=0}^Nc_j(\nu)t^{(j-1)/2}\log t+\sum_{j=0}^N
d_j(\nu)t^{(j-1)/2}+O(t^{(N+1)/2})
\]
as $t\to 0$. Moreover $c_1(\nu)=0$. 
\end{prop}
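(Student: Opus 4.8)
The plan is to assemble the statement from the four ingredients already prepared in this section: the formula \eqref{weightint} for $T_P^\prime(h_t^\nu)$ together with its splitting into integrals over $B(\ve)$ and $U(\ve)$, the exponential bound \eqref{estima5} on the outer piece, the short-time expansion \eqref{asympexp2} of the inner piece, and the stationary-phase expansion \eqref{asympexp3} of the scalar integrals it produces. All of the genuine analysis has been carried out in establishing those three estimates; what remains is bookkeeping of the powers of $t$ and of $\log t$, together with the parity computation that kills the $t^0\log t$ coefficient.

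First I would discard the outer piece: by \eqref{estima5} the integral over $U(\ve)$ is $O(t^{-d/2}\exp(-c(\ve)/(8t)))$, hence $O(t^{(N+1)/2})$ for every $N$, so it is absorbed into the remainder. For the inner piece I would substitute \eqref{asympexp2}. Each integral occurring there has the shape $I_i(\lambda)=\int_{\R^m}e^{-\lambda r^2(x)}g_i(x)\log\|x\|\,dx$ with $\lambda=1/(4t)$, $m=d-1$, and $g_i=\psi\,a_i^\nu\in C_c^\infty$ supported in $B(\ve)$; since $r^2(x)=\|x\|^2+R(x)$ with $|R(x)|\le C\|x\|^4$ by \eqref{distsq}, these are precisely the integrals governed by \eqref{asympexp3}. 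Inserting $\lambda=1/(4t)$ and using $\lambda^{-m/2-j/2}=(4t)^{(d-1+j)/2}$ and $\log\lambda=-\log t-\log 4$, the generic term of order $(i,j)$ with $i,j\ge 0$ in $t^{-d/2}\sum_i t^i I_i(1/(4t))$ carries the power $t^{(2i+j-1)/2}$, accompanied by a factor $\log t$ exactly when it comes from the $\log\lambda$-part of \eqref{asympexp3}. Writing $k=2i+j$ and summing over the finitely many pairs with $2i+j=k$, these reorganize into $\sum_k c_k(\nu)t^{(k-1)/2}\log t+\sum_k d_k(\nu)t^{(k-1)/2}$; the pair $(i,j)=(0,0)$ supplies the leading $t^{-1/2}$ term. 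To reach the remainder $O(t^{(N+1)/2})$ I would take the heat-kernel truncation $N_1$ in \eqref{asympexp2} and the stationary-phase truncation $N_2$ in \eqref{asympexp3} large relative to $N$: the heat-kernel error has exponent $N_1+1-d/2$ and the stationary-phase errors have exponent $(2i-1+N_2)/2$ (smallest at $i=0$), and both exceed $(N+1)/2$ once $N_1\ge(N+d-1)/2$ and $N_2\ge N+2$.

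It remains to check $c_1(\nu)=0$, i.e. that the $t^0\log t$ coefficient vanishes. The power count $t^{(2i+j-1)/2}$ forces $2i+j=1$ for exponent $0$, so $(i,j)=(0,1)$ and the only possible contribution is the order-$j=1$ coefficient of $\log\lambda$ in the expansion \eqref{asympexp3} of $I_0$. After replacing $r^2$ by $\|x\|^2$ (justified in the derivation of \eqref{asympexp3}), expanding $g_0$ in its Taylor polynomial, and rescaling $x\mapsto x/\sqrt\lambda$, that coefficient is a linear combination of the Gaussian moments $\int_{\R^m}e^{-\|y\|^2}y^\alpha\,dy$ with $|\alpha|=1$, each of which vanishes since it contains a one-dimensional integral of an odd function. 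Hence $c_1(\nu)=0$. The same parity observation in fact shows $c_k(\nu)=0$ for every odd $k$, but only $c_1(\nu)=0$ is needed here.

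The main obstacle is organizational rather than analytic: one must track the two independent truncation orders $N_1$ and $N_2$ simultaneously against the single target exponent $(N+1)/2$, and keep careful account of which terms acquire the factor $\log t$ through $\log\lambda=-\log t-\log 4$, so that the parity argument isolating the $(0,1)$-term is unambiguous. Once this bookkeeping is in place, the proposition follows by combining \eqref{weightint}, \eqref{estima5}, \eqref{asympexp2}, and \eqref{asympexp3}.
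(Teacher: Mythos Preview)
Your proposal is correct and follows the same route as the paper: discard the integral over $U(\ve)$ via \eqref{estima5}, expand the inner piece by \eqref{asympexp2}, feed each resulting integral into \eqref{asympexp3} with $\lambda=1/(4t)$, and reindex by $k=2i+j$. For $c_1(\nu)=0$ the paper likewise isolates the pair $(i,j)=(0,1)$ and writes $c_1(\nu)$ as a combination of the odd Gaussian moments $\int_{\R^{d-1}}e^{-\|x\|^2}x_i\,dx$, which vanish; your parity argument is exactly this (and in fact slightly cleaner, since the paper then begins an unnecessary computation of $\partial_{x_i}j(x_0,n(x)x_0)^{-1/2}|_{x=0}$ that is already moot once the integrals are seen to be zero).
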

\begin{proof}
Only the last statement needs to be proved. The only terms that can make a 
contribution to $c_1(\nu)$ are the Taylor coefficients of $a_0^\nu(n(x))$ 
of degree 1, i.e.,
\[
c_1(\nu)=\sum_{i=1}^{d-1}\frac{\partial}{\partial x_i}a_0^\nu(n(x))\big|_{x=0}
\int_{\R^{d-1}} e^{-\|x\|^2}x_i\;dx.
\]
By \eqref{leadcoeff} we have 
\[
a_0^\nu(n(x))=\tr(k(n(x)))\cdot j(x_0,n(x)x_0)^{-1/2}.
\]
We have
\[
j(x_0,n(x)x_0)=\frac{\sinh(r(x))}{r(x)},
\]
where $r(x)$ is given by \eqref{dist3}. Using \eqref{distsq}, if follows that
\begin{equation}
\frac{\partial}{\partial x_i} j(x_0,n(x)x_0)^{-1/2}\big|_{x=0}=0,\quad i=1,\dots,d.
\end{equation}
\end{proof}
Combining Proposition \ref{asympexp4} with \eqref{Idcontr1}, \eqref{hyper-asymp}
and \eqref{parab-asymp}, Theorem \ref{theo-main} follows.

\section{Analytic torsion}\label{sec-analytor}
\setcounter{equation}{0}

Let $\tau$ be an irreducible finite dimensional representation of $G$ on
$V_{\tau}$. Let $E'_{\tau}$ be the flat
vector bundle associated to the
restriction of $\tau$ to $\Gamma$. Then $E'_{\tau}$ is canonically isomorphic to
the locally homogeneous vector bundle $E_{\tau}$ associated to $\tau|_{K}$. By
\cite{MM}, there exists an inner product $\left<\cdot,\cdot\right>$ on
$V_{\tau}$ such that
\begin{enumerate}
\item $\left<\tau(Y)u,v\right>=-\left<u,\tau(Y)v\right>$ for all
$Y\in\mathfrak{k}$, $u,v\in V_{\tau}$
\item $\left<\tau(Y)u,v\right>=\left<u,\tau(Y)v\right>$ for all
$Y\in\mathfrak{p}$, $u,v\in V_{\tau}$.
\end{enumerate}
Such an inner product is called admissible. It is unique up to scaling. Fix an
admissible inner product. Since $\tau|_{K}$ is unitary with respect to this
inner product, it induces a metric on $E_{\tau}$ which will be called admissible
too. Note that for each $p$, the vector bundle $\Lambda^{p}(E_{\tau})
=\Lambda^pT^*X\otimes E_\tau$ is associated with the representation
\begin{equation}\label{nutau}
\nu_{p}(\tau):=\Lambda^{p}\Ad^{*}\otimes\tau:\:K\rightarrow\GL(\Lambda^{p}
\mathfrak{p}^{*}\otimes V_{\tau}),
\end{equation}
i.e., there is a canonical isomorphism
\begin{align}\label{p Formen als homogenes Buendel}
\Lambda^{p}(E_{\tau})\cong\Gamma\backslash(G\times_{\nu_{p}(\tau)}(\Lambda^{p}
\mathfrak{p}^{*}\otimes V_{\tau})).
\end{align}
By \eqref{isomor1} there is an isomorphism
\begin{align}\label{isomor4}
\Lambda^{p}(X,E_{\tau})\cong C^{\infty}(\Gamma\backslash G,\nu_{p}(\tau)),
\end{align}
and a corresponding isomorphism for the $L^{2}$-spaces. Let
$\Delta_{p}(\tau)$ be the Hodge-Laplacian on $\Lambda^{p}(X,E_{\tau})$ with
respect to the admissible inner product. By (6.9) in \cite{MM} it follows that
with respect to \eqref{isomor4} one has
\begin{equation}\label{kuga}
\Delta_{p}(\tau)=-R_\Gamma(\Omega)+\tau(\Omega)\Id,
\end{equation}
where $R_\Gamma$ is the right regular representation. Let $A_{\nu_p(\tau)}$
be the differential operator induced by $-R_\Gamma(\Omega)$ in $C^\infty(X,E_\nu)$.
We note that $\tau(\Omega)$ is a scalar which can be computed as follows.
If $\Lambda(\tau)=k_1(\tau)e_1+\dots k_{n+1}(\tau)e_{n+1}$ is the highest weight
of $\tau$, then
\begin{equation}\label{tauomega}
\tau(\Omega)=\sum_{j=1}^{n+1}\left(k_j(\tau)+\rho_j\right)^2-\sum_{j=1}^{n+1}
\rho_j^2.
\end{equation} 
For $G=\Spin(2n+1,1)$ this was proved in \cite[sect. 2]{MP1}. For
$G=\SO_0(2n+1,1)$, the proof is similar. Thus we get
\begin{equation}\label{regtr5}
\Tr_{\reg}\left(e^{-t\Delta_p(\tau)}\right)=e^{-t\tau(\Omega)}
\Tr_{\reg}\left(e^{-tA_{\nu_p(\tau)}}\right).
\end{equation}
In order to define the analytic torsion, we have to determine the asymptotic
behavior of $\Tr_{\reg}\left(e^{-t\Delta_p(\tau)}\right)$ as $t\to 0$ and 
$t\to \infty$. By Theorem \ref{theo-main} it follows that 
there is an asymptotic expansion
\begin{equation}\label{asympexp7}
\Tr_{\reg}\left(e^{-t\Delta_p(\tau)}\right)\sim
\sum_{j=0}^\infty a_j(p,\tau)t^{-d/2+j/2}\log t
+\sum_{k=0}^\infty b_k(p,\tau)t^{-d/2+j/2}
\end{equation}
as $t\to 0$. Moreover $a_n(p,\tau)=0$. To determine the asymptotic behavior of
the regularizes trace as $t\to \infty$, we use \eqref{regtrace2}.
Let $0\le\lambda_1\le\lambda_2\le
\cdots$ be the eigenvalues of $\Delta_p(\tau)$. By \eqref{kuga} and
\eqref{regtrace2} we have
\begin{equation}\label{regtrace5}
\begin{split}
\Tr_{\reg}\left(e^{-t\Delta_p(\tau)}\right)&=\sum_{j}e^{-t\lambda_j}+
\sum_{\substack{\sigma\in\hat{M};\sigma=w_0\sigma\\
\left[\nu_p(\tau):\sigma\right]\neq 0}}
e^{-t(\tau(\Omega)-c(\sigma))}\frac{\Tr(\widetilde{\boldsymbol{C}}(\sigma,
\nu_p(\tau),0))}{4
}\\
&-\frac{1}{4\pi}\sum_{\substack{\sigma\in\hat{M}\\
\left[\nu_p(\tau):\sigma\right]\neq
0}}e^{-t(\tau(\Omega)-c(\sigma))}\\
&\hskip20pt\cdot\int_{\R}e^{-t\lambda^2}
\Tr\left(\widetilde{\boldsymbol{C}}(\sigma,\nu_p(\tau),-i\lambda)\frac{d}{dz}
\widetilde{\boldsymbol{C}}(\sigma,\nu_p(\tau),i\lambda)\right)\,d\lambda.
\end{split}
\end{equation}
Assume that $d=2n+1$. Let $h_p(\tau):=\dim(\ker\Delta_p(\tau)\cap L^2)$. 
Using \eqref{regtrace5} and \cite[Lemmas 7.1, 7.2]{MP2},  
it follows that there is an asymptotic expansion
\begin{equation}\label{asympexp5}
\Tr_{\reg}\left(e^{-t\Delta_p(\tau)}\right)\sim h_p(\tau)+\sum_{j=1}^\infty c_j 
t^{-j/2},\quad t\to\infty
\end{equation}

Combined with \eqref{asympexp7} we can define the spectral zeta function by
\begin{equation}\label{speczeta1}
\begin{split}
\zeta_p(s;\tau)&:=\frac{1}{\Gamma(s)}\int_0^1 t^{s-1}\left(\Tr_{\reg}\left(
e^{-t\Delta_p(\tau)}\right)-h_p(\tau)\right)\,dt\\
&\quad+\frac{1}{\Gamma(s)}\int_1^\infty t^{s-1}\left(\Tr_{\reg}\left(
e^{-t\Delta_p(\tau)}\right)-h_p(\tau)\right)\,dt.
\end{split}
\end{equation}
By \eqref{asympexp7} the first integral on the right converges in
the half-plane $\Re(s)>d/2$ and admits a meromorphic extension to $\C$ which
is holomorphic at $s=0$. By \eqref{asympexp5}, the second integral converges
in the half-plane $\Re(s)<1/2$ and also admits a meromorphic extension to $\C$
which is holomorphic at $s=0$.

Now assume that $\tau\not\cong\tau_{\theta}$. Let the highest weight 
$\Lambda(\tau)$ be given by \eqref{Darstellungen von G}. The highest weight
$\Lambda(\tau_\theta)$ of $\tau_\theta$ is given by
\[
\Lambda(\tau_\theta)=k_1(\tau)e_1+\cdots+k_n(\tau)e_n-k_{n+1}(\tau)e_{n+1}.
\]
Therefore, the condition $\tau\not\cong\tau_{\theta}$ implies 
$k_{n+1}(\tau)\neq 0$. 
Then by \cite[Lemma 7.1]{MP2} we have
$\tau(\Omega)-c(\sigma)>0$ for all $\sigma\in\hat M$ with 
$[\nu_p(\tau)\colon\sigma]\neq0$ and $p=0,\dots,d$.  
Furthermore by \cite[Lemma 7.3]{MP2} we have
$\ker(\Delta_p(\tau)\cap L^2)=0$, $p=0,\dots,d$.
By \eqref{regtrace5} it follows that there exist $C,c>0$ such
that for all $p=0,\dots,d$:
\begin{equation}
\Tr_{\reg}\left(e^{-t\Delta_p(\tau)}\right)\le C e^{-ct},\quad t\ge 1.
\end{equation}
Using \eqref{asympexp4}, it follows that 
$\zeta_p(s;\tau)$ can be defined as in the compact case by
\begin{equation}\label{speczeta2}
\zeta_p(s;\tau):=\frac{1}{\Gamma(s)}\int_0^\infty t^{s-1}
\Tr_{\reg}\left(e^{-t\Delta_p(\tau)}\right)\;dt.
\end{equation}
The integral converges absolutely and uniformly on compact subsets of 
$\Re(s)>d/2$ and admits a meromorphic extension to $\C$ which is holomorphic 
at $s=0$. We define the regularized determinant of $\Delta_p(\tau)$ as in the 
compact case by
\begin{equation}\label{regdet}
\det \Delta_p(\tau):=\exp\left(-\frac{d}{ds}\zeta_p(s;\tau)\big|_{s=0}
\right).
\end{equation}
In analogy to the compact case we now define the analytic torsion 
$T_X(\tau)\in\R^+$ associated to the the flat bundle
$E_{\tau}$, equipped with the admissible metric, by
\begin{equation}\label{anator3}
T_{X}(\tau):=\prod_{p=1}^{d}\det\Delta_{p}(\tau)^{(-1)^{p+1}p/2}.
\end{equation}
 Let
\begin{equation}\label{ktau5}
K(t,\tau):=\sum_{p=1}^{d}(-1)^{p}p\Tr_{\reg}(e^{-t\Delta_{p}(\tau)}).
\end{equation}
If $\tau\not\cong\tau_\theta$, then $K(t,\tau)=O(e^{-ct})$ as $t\to\infty$, and
the analytic torsion is given by
\begin{align}\label{Def Tor}
\log{T_{X}(\tau)}=\frac{1}{2}\frac{d}{ds}\biggr|_{s=0}\left(\frac{1}{\Gamma(s)}
\int_{0}^{\infty}t^{s-1}K(t,\tau)\,dt\right),
\end{align}
where  the right hand side is defined near $s=0$ by analytic continuation.

\end{document}